\newtheorem{theorem}{Theorem}[section]
\newtheorem{proposition}[theorem]{Proposition}
\newtheorem{corollary}[theorem]{Corollary}
\newtheorem{lemma}[theorem]{Lemma}
\newtheorem{definition}[theorem]{Definition}
\newtheorem{remark}[theorem]{Remark}
\newcommand{\mc}{\mathcal}
\newcommand{\mf}{\mathfrak}
\newcommand{\aff}{\operatorname{aff}}
\newcommand{\hh}{\widehat}
\begin{document}
\title{Completely bounded homomorphisms of the Fourier algebra revisited}
\author{Matthew Daws}
\maketitle

\begin{abstract}
Let $A(G)$ and $B(H)$ be the Fourier and Fourier-Stieltjes algebras of locally compact groups $G$ and $H$, respectively.  Ilie and Spronk have shown that continuous piecewise affine maps $\alpha: Y \subseteq H\rightarrow G$ induce completely bounded homomorphisms $\Phi:A(G)\rightarrow B(H)$, and that when $G$ is amenable, every completely bounded homomorphism arises in this way.  This generalised work of Cohen in the abelian setting.  We believe that there is a gap in a key lemma of the existing argument, which we do not see how to repair.  We present here a different strategy to show the result, which instead of using topological arguments, is more combinatorial and makes use of measure theoretic ideas, following more closely the original ideas of Cohen.
\end{abstract}

\section{Introduction}

Cohen in \cite{cohen} classified all bounded homomorphisms from the group algebra $L^1(G)$ to the measure algebra $M(H)$, for locally compact abelian groups $G,H$; this was later expounded with different proofs by Rudin in \cite{rudinbook}.  The characterisation given was in terms of Pontryagin duals, and so in modern language is more naturally stated as studying homomorphisms between the Fourier algebra $A(\hh G)$ and the Fourier--Stieltjes algebra $B(\hh H)$; these algebras being introduced by Eymard \cite{eymard} for arbitrary locally compact groups.  It is now widely recognised that it is natural to work in the category of operator spaces and completely bounded maps when studying Fourier algebras of non-abelian groups.  In \cite{ilie} Ilie provided a generalisation of Cohen's result for discrete groups, characterising completely bounded homomorphisms $A(G)\rightarrow B(H)$ in terms of coset rings of $H$, and piecewise affine maps.  In \cite{is} Ilie and Spronk extended this result to all locally compact groups, making use of open coset rings.  We also mention \cite{pham} which shows similar results for merely contractive (not completely bounded) homomorphisms.

We do not fully follow the proof given in \cite{is}, nor than of Rudin in \cite{rudinbook}.  The proof of a key lemma in \cite{is} appears to have a gap, and we have been unable to see how to repair this.  In this paper, we return to Cohen's original proof for inspiration, and provide a new proof of the main result of \cite{is}, using a more complicated combinatorial argument, and using measure theory ideas.  Cohen's proof is, in places, firmly a proof about abelian groups, which necessitates new ideas in the non-abelian setting.

The papers \cite{ilie, is} have been widely cited, used and generalised in the 15 years since they were published, and we feel it is important that this result has a solid, careful proof attached to it.  Let us now provide some further background, which will allow us to be precise about the perceived problems in \cite{is, rudinbook}.  We will then give an overview of our alternative strategy.  The rest of the paper is concerned with the precise details of executing this new strategy.

Let $X$ be a set.  For us, a \emph{ring of subsets} of $X$, say, $\mc S$, is a (non-empty) collection of subsets of $X$ such that for $A,B\in\mc S$, also $A\cap B, A \cup B\in\mc S$, and if $A\in\mc S$ also $X\setminus A\in\mc S$.  Then $\emptyset$, and so also $X$, are in $\mc S$, and notice that $\mc S$ is also closed under taking symmetric differences.

Let $G$ be a group, and let $H\leq G$ be a subgroup.  A coset of $H$ is a left coset, $s_0H$, for some $s_0\in G$.  We remark that right cosets $Hs_0 = s_0(s_0^{-1}Hs_0)$ are left cosets of the (possibly different) subgroup $s_0^{-1}Hs_0$.  If $C = s_0H$ is a coset, then $C^{-1} C = H$ and $C C^{-1} C = C$.  With $G_1$ another group, a map $\alpha:H\rightarrow G_1$ is \emph{affine} if $\alpha(rs^{-1}t) = \alpha(r) \alpha(s)^{-1} \alpha(t)$ for $r,s,t\in H$.  This is equivalent to $H\rightarrow G_1; s \mapsto \alpha(s_0)^{-1} \alpha(s_0s)$ being a group homomorphism.  Given a subset $A\subseteq G$, let $\aff(A)$ be the smallest coset containing $A$.

The \emph{coset ring} of $G$, denoted $\Omega(G)$, is the smallest ring of subsets of $G$ containing all cosets of all subgroups of $G$.  Given $Y\subseteq G$ a map $\alpha:Y\rightarrow G_1$ is \emph{piecewise affine} when $Y$ is the finite disjoint union of sets $(Y_i)_{i=1}^n$ in $\Omega(G)$, and for each $i$ there is an affine map $\alpha_i:\aff(Y_i)\rightarrow G_1$ with $\alpha|_{Y_i} = \alpha_i|_{Y_i}$.  See \cite[Sections~2,~4]{ilie} and \cite[Section~1.2]{is} for combinatorial details about cosets and $\Omega(G)$.

Now let $G$ be a locally compact group, and let $\Omega_o(G)$ be the ring of sets generated by open cosets of $G$.  As an open subgroup is also closed, the same applies to open cosets, and so every member of $\Omega_o(G)$ is clopen.  The key lemma in \cite{is} is Lemma~1.3(ii), which states that with $G_1$ another locally compact group, if $\alpha:Y\rightarrow G_1$ is piecewise affine, and $\alpha$ is continuous, and $Y$ is open, then $\alpha$ has a continuous extension to $\overline\alpha:\overline Y\rightarrow G_1$.  Furthermore, $\overline Y$ is open, and in the decomposition $\overline Y = \bigsqcup_i Y_i$ we may assume that each $Y_i\in\Omega_o(G)$, and for each $i$ there is a open coset $C_i$ containing $Y_i$, and a continuous affine map $\alpha_i:C_i\rightarrow G_1$ which agrees with $\overline\alpha$ on $Y_i$.  The use of this lemma is that it allows us to combine the algebraic property that $\alpha$ is piecewise affine with the topological property that $\alpha$ is continuous, and conclude that $\alpha$ is the ``union'' of continuous affine maps.

In the proof of \cite[Lemma~1.3(ii)]{is} we have a coset $K$ and subcosets $N_1,\cdots,N_k$ of infinite index, and it is claimed that if $Y = K \setminus \bigcup_j N_j$ with $\overline Y$ having non-empty interior, then $\overline Y = \overline K \setminus \bigcup_{j\in J} N_j$ where $J$ is the collection of indices with $N_j$ having non-empty interior.  This is not true.  A counter-example from \cite{mopost} exhibits a compact abelian group $G_0$ and an index 2 subgroup $H_0$ so that both $H_0$ and $H_1=G_0\setminus H_0$ have empty interior, and yet of course $G_0 = H_0 \sqcup H_1$.   If we set $G = K = G_0 \times\mathbb Z$ and $N_i = H_i \times \{0\}$ then each $N_i$ has infinite index in $K$, each has empty interior, $Y = K\setminus (N_0\cup N_1) = G_0 \times (\mathbb Z\setminus\{0\})$ is already clopen, as is $K$, and yet $\overline Y \not= \overline K$.  The same issue can be seen in the middle of \cite[Section~4.5.2]{rudinbook}.

This counter-example does not mention (piecewise) affine maps, but we could simply let $\alpha$ be the identity.  The moral seems to be that we chose a ``silly'' way to write $\alpha$ as a piecewise affine map.  However, given an arbitrary piecewise affine map $\alpha$ which we happen to know is continuous, we need some argument to show that we can exhibit that $\alpha$ is piecewise affine in a ``sensible'' way.  Cohen's original argument uses knowledge about the graph of $\alpha$, and then a delicate combinatorial argument to show that we can exhibit the graph using sets built from the graph itself (compare Theorem~\ref{thm:one} below).  This result can then be used to show that we can exhibit that $\alpha$ is piecewise affine using at least measurable sets (the subgroup $H_0$ in the example above is not measurable) which together with a measure-theoretic argument then yields an analogue of \cite[Lemma~1.3(ii)]{is}.  We will use exactly the same general approach, but adapted to possible non-abelian groups.

Some of our argument closely follows Cohen's paper \cite{cohen}.  We must say that we find 
many of Cohen's arguments rather hard to follow.  In particular, our key technical result, Proposition~\ref{prop:3}, is similar to the lemma on \cite[Page~223--224]{cohen}, the proof of which we do not understand.  From our limited understanding, it seems clear, however, that this lemma of Cohen requires at least that every subgroup involved be normal (which is automatic, if the groups are abelian!)  Given that we need to check that all results hold for non-abelian groups, and that our central argument is entirely new, we have decided to give full details for all our results.  We indicate in a number of places where we follow Cohen quite closely.

\subsection{Acknowledgments}

We thank Yemon Choi for useful remarks on a preprint of this paper, and the referee for helpful comments
which have improved the clarity of the paper.
The author is partially supported by EPSRC grant EP/T030992/1.

\section{Initial setup of the problem}\label{sec:2}

We fix locally compact groups $G,H$ and a completely bounded homomorphism $\Phi:A(G)\rightarrow B(H)$.  Following the proof of \cite[Theorem~3.7]{is}, there is a continuous map $\alpha:H\rightarrow G_\infty$ with $\Phi(u)(s) = u(\alpha(s))$ for each $s\in G_\infty$.  Here $G_\infty$ is either the one-point compactification of $G$, if $G$ is not compact, or the disjoint union $G \sqcup \{\infty\}$ if $G$ is compact.  We extend each $u\in A(G)$ to a (continuous) function on $G_\infty$ by setting $u(\infty)=0$.  Then $Y = \alpha^{-1}(G)$ is open in $H$.  Under the additional hypothesis that $G$ is amenable, $\alpha:Y\rightarrow G$ is piecewise affine, if we regard $H$ and $G$ as just groups, with no topology.  In what follows, we shall not use amenability again.

From now on, $G,H$ will be arbitrary groups, with additional hypotheses stated as needed.  Given $Y\subseteq H$ and $\alpha:Y\rightarrow G$ we shall in the sequel write $\alpha: Y\subseteq H \rightarrow G$.  The $\emph{graph}$ of $\alpha$ is
\[ \mc G(\alpha) = \{ (s,\alpha(s)) : s\in Y \} \subseteq H\times G. \]
An extremely useful result is the following.

\begin{lemma}[{\cite[Lemma~1.2]{is}}]\label{lem:2}
Let $\alpha:Y\subseteq H\rightarrow G$ be a map.  Then $\mc G(\alpha) \in \Omega(H\times G)$ if and only if $\alpha$ is piecewise affine.
\end{lemma}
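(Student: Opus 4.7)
First, I would observe that the graph of an affine map is itself a coset in the product. If $\alpha:C\to G$ is affine on a coset $C\subseteq H$, then for $(r,\alpha(r)),(s,\alpha(s)),(t,\alpha(t))\in\mc G(\alpha)$,
\[ (r,\alpha(r))(s,\alpha(s))^{-1}(t,\alpha(t))=\bigl(rs^{-1}t,\,\alpha(r)\alpha(s)^{-1}\alpha(t)\bigr)=\bigl(rs^{-1}t,\,\alpha(rs^{-1}t)\bigr)\in\mc G(\alpha), \]
so $\mc G(\alpha)$ is closed under the coset triple product and is a coset in $H\times G$. For the $(\Leftarrow)$ direction, given a piecewise-affine decomposition $Y=\bigsqcup_i Y_i$ with $Y_i\in\Omega(H)$ and affine extensions $\alpha_i:\aff(Y_i)\to G$, I would write $\mc G(\alpha)=\bigsqcup_i\bigl(\mc G(\alpha_i)\cap(Y_i\times G)\bigr)$. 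Each $\mc G(\alpha_i)$ is a coset and $Y_i\times G\in\Omega(H\times G)$ (since the preimage under $\pi_H$ carries $\Omega(H)$ into $\Omega(H\times G)$), so the whole expression lies in $\Omega(H\times G)$.

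For the $(\Rightarrow)$ direction I would invoke the standard normal form for coset rings (in the spirit of the combinatorics developed in \cite[Sections~2,~4]{ilie}): every set in $\Omega(H\times G)$ is a finite disjoint union
\[ \mc G(\alpha)=\bigsqcup_{i=1}^n \bigl(C_i\setminus\bigcup_{j}D_{ij}\bigr), \]
where each $C_i$ is a coset and each $D_{ij}$ is a proper subcoset of $C_i$. Write $E_i=C_i\setminus\bigcup_j D_{ij}$ and $Y_i=\pi_H(E_i)$; the plan is to show that $Y=\bigsqcup_i Y_i$, that each $Y_i\in\Omega(H)$, and that $\alpha|_{Y_i}$ agrees with an affine map from $\aff(Y_i)$ to $G$.

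The key structural observation is that, because $E_i\subseteq\mc G(\alpha)$ is contained in a function's graph, each nonempty fibre $E_i\cap(\{s\}\times G)$ is the singleton $\{(s,\alpha(s))\}$. Let $K_i$ be the subgroup such that $C_i$ is a left coset of $K_i$, and set $N_i=K_i\cap(\{e_H\}\times G)$. In the benign case $N_i=\{(e_H,e_G)\}$ the projection $\pi_H|_{C_i}$ is a bijection onto the coset $\pi_H(C_i)$, so $C_i$ is itself the graph of an affine map $\alpha_i:\pi_H(C_i)\to G$, and $Y_i=\pi_H(C_i)\setminus\bigcup_j\pi_H(D_{ij})\in\Omega(H)$ with $\alpha|_{Y_i}=\alpha_i|_{Y_i}$; this is exactly the piecewise-affine data wanted.

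The main obstacle is the degenerate case $N_i\ne\{(e_H,e_G)\}$: the fibres of $C_i$ are then full cosets of a nontrivial subgroup $\widetilde N_i\leq G$, and the finitely many proper subcosets $D_{ij}$ must cull each such fibre down to a single point. My plan here is either to rule this case out by a covering-type combinatorial argument (a finite union of proper subcosets of a coset cannot leave exactly one residual point), or, failing that, to refine the normal form by replacing $K_i$ with a subgroup $K_i'\leq K_i$ meeting $\{e_H\}\times G$ trivially, and re-expressing $C_i$ as a finite disjoint union of $K_i'$-cosets so that the benign case applies to each piece. Making this reduction go through in full generality---particularly for non-abelian $G$, where transversals to $N_i$ in $K_i$ need not form subgroups and the extension $1\to N_i\to K_i\to\pi_H(K_i)\to 1$ need not split---is the principal technical hurdle, and the step I would expect to require the most care.
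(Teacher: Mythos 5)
Your $(\Leftarrow)$ direction is correct and complete, and so is your ``benign case'': the triple-product computation showing that graphs of affine maps are cosets, the identity $\mc G(\alpha)=\bigsqcup_i\bigl(\mc G(\alpha_i)\cap(Y_i\times G)\bigr)$, and the projection argument when $K_i\cap(\{e_H\}\times G)$ is trivial are all sound. (For the record, the paper does not reprove this lemma; it quotes \cite[Lemma~1.2]{is}, whose proof has the same skeleton as yours.) The genuine gap is exactly where you locate it: the degenerate case, and neither of your proposed repairs can work as stated. The hoped-for combinatorial fact is false for \emph{proper} subcosets: in $H\times G=\mathbb Z\times(\mathbb Z/2)$ take $C=\mathbb Z\times(\mathbb Z/2)$ and $D=\mathbb Z\times\{1\}$; then $D$ is a proper subcoset, and $C\setminus D=\mathbb Z\times\{0\}$ is a perfectly good graph in which every fibre of $C$ has been culled to a single point. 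So degenerate decompositions of honest graphs exist and cannot be ``ruled out''. Your second repair is also unavailable in general: re-expressing $C_i$ as a finite disjoint union of $K_i'$-cosets forces $[K_i:K_i']<\infty$, and then $K_i'\cap N_i$ has finite index in $N_i$; so whenever $N_i$ is infinite, every admissible $K_i'$ still meets $\{e_H\}\times G$ nontrivially.

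The repair that works is to strengthen the normal form \emph{before} analysing cases, which is exactly what Corollary~\ref{corr:1} of the paper provides: one may arrange that each $D_{ij}$ is a subcoset of \emph{infinite index} in $C_i$. (This already dissolves the example above, since $\mathbb Z\times\{1\}$ has index $2$.) With that normal form the degenerate case is impossible, by Neumann's lemma (Lemma~\ref{lem:5}): suppose $E_i=C_i\setminus\bigcup_j D_{ij}$ is nonempty and $n\in K_i\cap(\{e_H\}\times G)$ is nontrivial. Points of $E_i n$ have the same first coordinate as points of $E_i$ but a different second coordinate, so $E_i n\cap\mc G(\alpha)=\emptyset$; on the other hand $E_i n\subseteq C_i n=C_i$, whence $E_i n\subseteq\bigcup_j D_{ij}$ and so $E_i\subseteq\bigcup_j D_{ij}n^{-1}$. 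Therefore
\[ C_i = E_i\cup\bigcup_j D_{ij} \subseteq \bigcup_j D_{ij}n^{-1}\cup\bigcup_j D_{ij}, \]
and each $D_{ij}n^{-1}$ is a left coset of a conjugate (by $n\in K_i$) of the subgroup underlying $D_{ij}$, hence still of infinite index in $K_i$. Translating $C_i$ back to $K_i$ exhibits the group $K_i$ as a finite union of cosets of infinite index, contradicting Lemma~\ref{lem:5}. Thus $K_i\cap(\{e_H\}\times G)$ is trivial for every $i$, every piece is benign, and your projection argument then finishes. In short, your skeleton is right, but the two missing ingredients are the infinite-index normal form and Neumann's covering lemma; without them the degenerate case cannot be closed.
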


In the next section, we shall prove our main result, Theorem~\ref{thm:one}.  In the following section, we apply this to show that $\alpha$ can be exhibited as a piecewise affine map with the component sets involved at least being Borel, Proposition~\ref{prop:2}.  In the $\sigma$-finite case, a measure theoretic argument then yields what we want, and can be bootstrapped into a proof in the general case, Theorem~\ref{thm:2}.

\section{Combinatorial Lemma}

We begin by making a non-standard, but useful, definition.  (This definition is sometimes termed the ``measure theory ring of sets'', but we shall stick to our \emph{ad hoc} definition for clarity.)

\begin{definition}\label{defn:1}
Let $X$ be a set, and $\mc S$ a collection of subsets of $X$.  We say that $\mc S$ is a \emph{relative ring of subsets} when $\mc S$ is closed under finite unions, intersections, and relative complements, in the sense that if $A,B\in\mc S$ then $A\setminus B\in\mc S$.  This means that $\emptyset\in\mc S$ but perhaps $X$ is not in $\mc S$.

Let $G$ be a group, and $\alpha$ a collection of subsets of $G$.  Let $\mc R(\alpha)$ be the relative ring of subsets generated by $\alpha$ and all left translations of elements of $\alpha$.  Let $\mc R_2(\alpha)$ be the relative ring of subsets generated by $\alpha$ and all two sided translates of $\alpha$, that is, sets of the form $sAt$ where $A\in\alpha, s,t\in G$.
\end{definition}

This section is devoted to proving the following result.  As to why we work with two sided translates and not just left translates, see Remark~\ref{rem:1} below.

\begin{theorem}\label{thm:one}
Let $G$ be a group and let $Y\in\Omega(G)$.  There are subgroups $H_1,H_2,\cdots,H_n$ in $\mc R_2(\{Y\})$ such that $Y\in\mc R(\{H_1,\cdots,H_n\})$.
\end{theorem}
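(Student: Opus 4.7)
The plan is to prove Theorem~\ref{thm:one} by induction on some well-founded complexity measure for $Y\in\Omega(G)$, for instance the minimum number of cosets needed in a Boolean expression for $Y$, perhaps refined by the ``size'' (e.g.\ index) of the affine hull $\aff(Y)$ inside $G$. The base case is where $Y$ is a single coset $s_0H_0$ (or empty): then $H_0 = s_0^{-1}Y$ is a left translate of $Y$, so $H_0\in\mc R_2(\{Y\})$, and $Y = s_0 H_0$ is a left translate of $H_0$, giving $Y\in\mc R(\{H_0\})$.

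For the inductive step, I would first reduce to the case $e\in Y$ by fixing $y_0\in Y$ and passing to $Y' = y_0^{-1}Y$. Since $y_0^{-1}Y = e\cdot Y\cdot e$ after left multiplication, $Y'\in\mc R_2(\{Y\})$, and more importantly $\mc R_2(\{Y'\})\subseteq\mc R_2(\{Y\})$; furthermore any expression $Y'\in\mc R(\{K_1,\dots,K_m\})$ implies $Y = y_0Y'\in\mc R(\{K_1,\dots,K_m\})$ since $\mc R$ is closed under left translation of its generators. Because $e\in Y'$, the hull $\aff(Y')$ is the \emph{subgroup} $H$ generated affinely by $Y'$, which is the natural candidate for one of the $H_i$.

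The crucial technical step is then to show that $H = \aff(Y')\in\mc R_2(\{Y'\})$. Since $Y'\in\Omega(G)$, the standard structure theorem expresses $Y'$ as a finite disjoint union $\bigsqcup_i(C_i\setminus F_i)$ with $C_i$ cosets and $F_i$ finite unions of proper subcosets; provided $\aff(Y')=H$, the $C_i$ together generate $H$ in the affine sense, and I would aim to prove that finitely many two-sided translates of $Y'$ can be Booleanly combined to cover $H$ exactly, exploiting that the ``holes'' $F_i$ have strictly smaller hull than $C_i$ and so can be translated out of the way. Having established $H\in\mc R_2(\{Y'\})\subseteq\mc R_2(\{Y\})$, I then view $Y'$ as an element of $\Omega(H)$ inside the ambient group $H$, which is a strictly simpler instance of the problem; by induction there are subgroups $H_1,\dots,H_m\leq H$, each in $\mc R_2(\{Y'\})\subseteq\mc R_2(\{Y\})$, such that $Y'\in\mc R(\{H_1,\dots,H_m\})$. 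Adjoining $H$ to the list gives the required family for $Y$.

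The main obstacle I anticipate is the $\aff$-extraction step: $\aff(Y')$ is \emph{a priori} an infinite object, built by iterating the group operation, yet it must be produced here by a \emph{finite} Boolean combination of translates of $Y'$. This is the point at which Cohen's original argument seems to use abelianness (to pass freely between left and right translates, and to form quotient groups), and where a new idea is needed for non-abelian $G$ -- presumably motivating the use of two-sided translates in $\mc R_2$ rather than only left translates in $\mc R$ (cf.\ Remark~\ref{rem:1}). A secondary subtlety is choosing an induction parameter that decreases under the passage $Y\rightsquigarrow Y'\subseteq H$, since the Boolean complexity of $Y'$ inside $H$ may match that of $Y$ inside $G$; a double induction, first on the affine-hull parameter and then on Boolean complexity within a fixed hull, should resolve this.
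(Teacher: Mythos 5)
Your overall scaffolding (translate so that $e\in Y'$, find an enveloping subgroup of $Y'$ inside $\mc R_2(\{Y'\})$, then induct inside it) hinges on a step that is not just hard but false, and the example in Remark~\ref{rem:1} of the paper already defeats it. Take $G=\mathbb F_2$ with free generators $a,b$, and $Y'=\langle a\rangle\sqcup\langle a\rangle b^{-1}$, which lies in $\Omega(G)$ and contains $e$. Any subgroup containing $Y'$ contains both $a$ and $b^{-1}$, so the only subgroup of $G$ containing $Y'$ is $G$ itself; in particular $\aff(Y')=G$. But $G\notin\mc R_2(\{Y'\})$: every two-sided translate $sY't$ is the union of two left cosets of conjugates of $\langle a\rangle$, so every element of the \emph{relative} ring $\mc R_2(\{Y'\})$ (no complementation in $G$ is available) is contained in a finite union of cosets of infinite-index subgroups, and by Lemma~\ref{lem:5} no such union can be all of $G$. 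Thus there is no subgroup $H$ whatsoever with $Y'\subseteq H$ and $H\in\mc R_2(\{Y'\})$, so the ``$\aff$-extraction'' step --- and with it the plan of re-embedding $Y'$ as an element of $\Omega(H)$ and inducting inside $H$ --- cannot be repaired by a cleverer translation argument or a different induction parameter. (Two-sided translates do not rescue this; in the paper they are needed for a quite different purpose, at the end of the proof of Proposition~\ref{prop:3}.)

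What makes the theorem true despite this is that its conclusion never asks for a single subgroup containing $Y$: in the decomposition $Y\in\mc R(\{H_1,\dots,H_n\})$, the set $Y$ is reassembled from left translates of \emph{several} subgroups, each of which may have infinite index in $G$. Indeed, in the example above $Y'\cap Y'b=\langle a\rangle=H_1$ and $b(Y'\cap Y'b)b^{-1}=bY'b^{-1}\cap bY'=H_2$, so both $H_1$ and $H_2=bH_1b^{-1}$ lie in $\mc R_2(\{Y'\})$, and $Y'=H_1\cup b^{-1}H_2\in\mc R(\{H_1,H_2\})$. This is the shape of the paper's actual proof: starting from a coset decomposition of $Y$ over subgroups $H_1,\dots,H_n$ arranged into a DAG by inclusion, Proposition~\ref{prop:3} produces, for each maximal (``top level'') $H_i$, a finite-index enlargement $H\supseteq H_i$ with $H\in\mc R_2(\{Y\})$ --- a subgroup commensurable with one already present in the decomposition, never $\aff(Y)$ --- using the counting machinery of ``big'' cosets; the induction is then on the depth of the inclusion DAG rather than on Boolean complexity. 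If you want to salvage your outline, the necessary change is to replace ``affine hull of $Y'$'' by ``finite-index enlargement of each maximal subgroup occurring in a decomposition of $Y'$''.
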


We first of all collect some combinatorial lemmas.  These results are similar in spirit to arguments found in \cite[Section~3]{cohen}.  In the following, the empty intersection is by definition equal to $X$, and the empty union equal to $\emptyset$.

\begin{lemma}\label{lem:3}
Let $X$ be a set, let $\mf A$ be a collection of subsets of $X$, and let $B$ be in the relative ring generated by $\mf A$.  Then $B$ is the finite (disjoint, if we wish) union of sets of the form
\begin{equation}
\bigcap_{i=1}^n A_i \cap \bigcap_{j=1}^m (X\setminus B_j)
= \Big(\bigcap_{i=1}^n A_i\Big) \setminus \Big( \bigcup_{j=1}^m B_j \Big)
 \label{eq:1}
\end{equation}
for some $n\geq 1, m\geq 0$, and some $A_i,B_j\in\mf A$.
\end{lemma}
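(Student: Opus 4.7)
The plan is to identify the collection $\mathfrak{C}$ of all finite unions of sets of the form \eqref{eq:1} with the relative ring generated by $\mathfrak{A}$, and then refine the representation to a disjoint one via an atom argument.

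First, I would observe that $\mathfrak{C}$ trivially contains each $A\in\mathfrak{A}$ (take $n=1$, $m=0$) and also $\emptyset$ (the empty union). Closure of $\mathfrak{C}$ under finite unions is immediate. For closure under intersections, it suffices (by distributivity) to intersect two sets of form \eqref{eq:1}; the basic identity $(A\setminus B)\cap(A'\setminus B')=(A\cap A')\setminus(B\cup B')$ yields
\[
\Bigl(\bigcap_{i} A_i\setminus\bigcup_{j}B_j\Bigr)\cap\Bigl(\bigcap_{i'} A'_{i'}\setminus\bigcup_{j'}B'_{j'}\Bigr)
=\bigcap_{i,i'}(A_i\cap A'_{i'}\text{ listed})\setminus\bigcup_{j,j'}(B_j\text{ and }B'_{j'}\text{ listed}),
\]
which is again of form \eqref{eq:1}. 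For relative complements, again by distributivity reduce to $U\setminus V$ with $U,V$ of form \eqref{eq:1}; writing $X\setminus V=\bigcup_{i'}(X\setminus A'_{i'})\cup\bigcup_{j'}B'_{j'}$ gives
\[
U\setminus V=\bigcup_{i'}\bigl(U\setminus A'_{i'}\bigr)\cup\bigcup_{j'}\bigl(U\cap B'_{j'}\bigr),
\]
and each summand is manifestly of the form \eqref{eq:1} (absorb $A'_{i'}$ into the $B$-part of $U$, or $B'_{j'}$ into the $A$-part). Hence $\mathfrak{C}$ is a relative ring containing $\mathfrak{A}$, so it contains the relative ring generated by $\mathfrak{A}$; the reverse containment is obvious. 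This gives the first (not necessarily disjoint) conclusion.

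For the disjoint version I would use the standard atom argument. Given a representation $B=U_1\cup\cdots\cup U_K$ with each $U_k=\bigcap_{i\in I^+_k}A_i\setminus\bigcup_{i\in I^-_k}A_i$, let $A_1,\dots,A_N$ enumerate all the members of $\mathfrak{A}$ appearing in this representation. For each non-empty $J\subseteq\{1,\dots,N\}$ set
\[
E_J=\bigcap_{i\in J}A_i\ \setminus\ \bigcup_{i\notin J}A_i,
\]
which is of the form \eqref{eq:1} and the $E_J$'s are pairwise disjoint as $J$ varies. A short check shows $U_k=\bigsqcup_{I^+_k\subseteq J,\,J\cap I^-_k=\emptyset}E_J$, so $B$ is the disjoint union of those $E_J$ for which the indexing condition is met by at least one $k$. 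This gives the desired disjoint decomposition.

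The argument is almost entirely bookkeeping; the one place to be careful is the relative-complement step, where one must verify that the resulting sets fit the form \eqref{eq:1} with the roles of $A$'s and $B$'s correctly assigned (in particular that $n\geq 1$ is maintained, which follows because we are subtracting from a set $U$ that already has $n\geq 1$ positive factors). I do not anticipate a genuine obstacle here; the main point is simply to set up $\mathfrak{C}$ and check the three closure properties cleanly.
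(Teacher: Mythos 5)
Your proof is correct, and its main body takes essentially the same route as the paper: you define $\mathfrak{C}$ to be the collection of finite unions of sets of the form \eqref{eq:1} and verify it is a relative ring containing $\mathfrak{A}$, handling intersections by distributivity together with the identity $(A\setminus B)\cap(A'\setminus B')=(A\cap A')\setminus(B\cup B')$, and relative complements by expanding $X\setminus V=\bigcup_{i'}(X\setminus A'_{i'})\cup\bigcup_{j'}B'_{j'}$ and absorbing $A'_{i'}$ into the negative part of $U$ and $B'_{j'}$ into its positive part; this is word-for-word the paper's strategy (your $\mathfrak{C}$ is the paper's $\mathfrak{B}'$). Where you genuinely differ is the disjointness refinement. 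The paper dispatches it in one sentence via the telescoping decomposition $\bigcup_{i=1}^n A_i = A_1\cup(A_2\setminus A_1)\cup(A_3\setminus(A_1\cup A_2))\cup\cdots$, which is rather terse: each telescoped piece $P_k\setminus(P_1\cup\cdots\cup P_{k-1})$ is \emph{a priori} only known to lie in the relative ring, i.e.\ to be a possibly non-disjoint finite union of sets of the form \eqref{eq:1}, so strictly speaking a further routine unwinding is needed to conclude. Your atom argument --- partitioning by $E_J=\bigcap_{i\in J}A_i\setminus\bigcup_{i\notin J}A_i$ over non-empty $J$, and checking $U_k=\bigsqcup\{E_J : I^+_k\subseteq J,\ J\cap I^-_k=\emptyset\}$ --- produces in a single step pairwise disjoint sets that are manifestly of the form \eqref{eq:1} (non-emptiness of $J$ giving $n\geq 1$), so it is the more self-contained of the two treatments; its only cost is exponential bookkeeping in the number of sets involved, which is irrelevant here. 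One trivial caveat: some atoms $E_J$ may be empty (for instance if enumerated sets repeat), but discarding empty atoms preserves both disjointness and the required form, so this is harmless.
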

\begin{proof}
Let $\mf B$ be the relative ring generated by $\mf A$, so $B\in\mf B$.  Let $\mf B'$ be the collection of finite unions of sets of the form (\ref{eq:1}), so that $\mf A \subseteq \mf B' \subseteq \mf B$.  We shall prove that $\mf B'$ is a relative ring of sets, so that $\mf B' = \mf B$, as claimed.

By definition, $\mf B'$ is closed under unions.  If $(P_k)_{k=1}^t$ and $(Q_l)_{l=1}^s$ are of the form (\ref{eq:1}), set $P=\bigcup_k P_k$ and $Q=\bigcup_l Q_l$, then
\[ P \cap Q = (P_1\cup\cdots\cup P_t) \cap (Q_1\cup\cdots\cup Q_s) = \bigcup_{k,l} P_k \cap Q_l \]
and clearly $P_k\cap Q_l$ is of the form (\ref{eq:1}).  So $\mf B'$ is closed under intersections.  Furthermore,
\[ P \setminus Q = P \cap (X\setminus Q) = P\cap (X\setminus Q_1) \cap \cdots \cap (X\setminus Q_s). \]
Thus, to show that $P \setminus Q \in \mf B'$, it suffices to show that, say, $P\cap (X\setminus Q_1)\in\mf B'$.  Let $Q_1 = \bigcap_{i=1}^n A_i \cap \bigcap_{j=1}^m (X\setminus B_j)$, so that
\begin{align*}
P \cap (X\setminus Q_1) &= P \cap \Big( \bigcup_{i=1}^n (X\setminus A_i) \cup \bigcup_{j=1}^m B_j \Big) \\
&= \bigcup_{i=1}^n (P\cap (X\setminus A_i)) \cup \bigcup_{j=1}^m (P\cap B_j) 
= \bigcup_{i=1}^n (P\setminus A_i) \cup \bigcup_{j=1}^m (P\cap B_j).
\end{align*}
As each $P\cap B_j\in\mf B'$ and $\mf B'$ is closed under unions, it remains to show that, say, $P\setminus A_1\in\mf B'$, but $P \setminus A_1 = \bigcup_{k=1}^t P_k \setminus A_1$, so in fact it remains to show that, say, $P_1 \setminus A_1\in\mf B'$.  If $P_1 = \bigcap_{i=1}^{n'} C_i \cap \bigcap_{j=1}^{m'} (X\setminus D_j)$ then
\[ P_1 \setminus A_1 = \bigcap_{i=1}^{n'} C_i \cap \bigcap_{j=1}^{m'} (X\setminus D_j) \cap (X\setminus A_1) \]
and so indeed $P_1 \setminus A_1\in\mf B'$.

Having shown that $\mf B = \mf B'$, it is easy that each member of $\mf B$ is a \emph{disjoint} union of sets of the form (1), because for any sets $(A_i)$ we have that $\bigcup_{i=1}^n A_i = A_1 \cup (A_2\setminus A_1) \cup (A_3\setminus (A_1\cup A_2)) \cup \cdots$.
\end{proof}

We shall use the following repeatedly, and so we pull it out as a lemma.

\begin{lemma}\label{lem:6}
Let $G$ be a group, and let $H_1,\cdots,H_n$ be subgroups.  There are subgroups $K_1,\cdots,K_{n'}$, each a subgroup of some $H_i$, and with $[K_i:K_i\cap K_j]=1$ or $\infty$ for all $i,j$, and with each $H_i$ a finite union of cosets of the $K_j$.  If the family of subgroups $\{H_i\}$ is closed under taking intersections, then the family $\{K_i\}$ is a subfamily of $\{H_i\}$.
\end{lemma}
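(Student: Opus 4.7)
The plan is to collapse $\{H_1,\dots,H_n\}$ along a natural equivalence on its intersection-closure. I will set $\mathcal L=\{\bigcap_{i\in S}H_i:\emptyset\neq S\subseteq\{1,\dots,n\}\}$, a finite family of subgroups of $G$, closed under binary intersection, with every element contained in some $H_i$ and containing each $H_i$ itself (taking $S=\{i\}$).

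On $\mathcal L$ I will introduce the symmetric relation $K\equiv L$ iff both $[K:K\cap L]$ and $[L:K\cap L]$ are finite. Transitivity is the one real step: given $K\equiv L$ and $L\equiv M$, I apply the standard index inequality $[A:A\cap B]\leq [C:B]$ for subgroups $A,B\leq C$, with $C=L$, $A=K\cap L$ and $B=L\cap M$, to obtain $[K\cap L:K\cap L\cap M]\leq[L:L\cap M]<\infty$; multiplying by $[K:K\cap L]<\infty$ yields $[K:K\cap L\cap M]<\infty$, hence $[K:K\cap M]<\infty$, and the other direction is symmetric. The same inequality, iterated, shows that for any equivalence class $c$ the intersection $M_c:=\bigcap_{L\in c}L$, which lies in $\mathcal L$ by closure, still satisfies $[L:M_c]<\infty$ for every $L\in c$; in particular $M_c\in c$ and $M_c$ is the smallest element of $c$.

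I then take $\{K_1,\dots,K_{n'}\}:=\{M_c:c\in\mathcal L/\!\equiv\}$. Each $M_c\in\mathcal L$ is a subgroup of some $H_i$. Each $H_i\in\mathcal L$ belongs to some class $c$, and $[H_i:M_c]<\infty$ exhibits $H_i$ as a finite disjoint union of cosets of $M_c$. If $\{H_i\}$ is already closed under intersection then $\mathcal L=\{H_i\}$ and every $M_c\in\{H_i\}$, giving the last clause. For the pairwise index condition, if $[M_c:M_c\cap M_{c'}]<\infty$ then $M_c\cap M_{c'}\equiv M_c$, so $M_c\cap M_{c'}$ lies in class $c$; as $M_c$ is the smallest element of its class, $M_c\subseteq M_c\cap M_{c'}$, and hence $M_c\subseteq M_{c'}$ with $[M_c:M_c\cap M_{c'}]=1$. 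The only delicate point I foresee is the transitivity calculation; since the underlying index bound requires no normality hypothesis, the argument goes through for arbitrary, possibly non-abelian, groups.
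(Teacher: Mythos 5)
Your argument is correct, and it takes a genuinely different route from the paper's. The two facts you rely on are valid without any normality hypothesis: the inequality $[A:A\cap B]\leq[C:B]$ for $A,B\leq C$ holds because $a(A\cap B)\mapsto aB$ injects $A/(A\cap B)$ into $C/B$, and tower multiplicativity of indices holds for arbitrary subgroups; from these, your transitivity of commensurability, the finiteness of $[L:M_c]$ for $L\in c$, and the dichotomy argument (if $[M_c:M_c\cap M_{c'}]<\infty$ then $M_c\cap M_{c'}$ lies in $c$, so minimality of $M_c$ gives $M_c\subseteq M_c\cap M_{c'}$ and the index is $1$) all go through, as does the final clause. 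The paper instead runs an iterative replacement: it scans the subgroups in turn, and whenever $[H_1:H_1\cap H_i]$ is finite and greater than $1$ it replaces $H_1$ by $H_1\cap H_i$, the real work being the verification that such a replacement does not destroy indices that were already $1$ or $\infty$ --- a coset-covering computation which is, in substance, the same index argument as your transitivity step, but packaged as an induction invariant rather than as an equivalence relation. What your route buys: a one-shot, canonical construction, with each $K_j$ the least element of its commensurability class, and the transitivity made explicit. What the paper's route buys: it never forms the full intersection-closure $\mathcal{L}$ (which can have up to $2^n-1$ members), and it automatically yields $n'\leq n$, each $K_j$ arising as a nested intersection starting from some $H_i$; your family has one subgroup per class of $\mathcal{L}$, which may exceed $n$, though the lemma places no bound on $n'$ so this is harmless (and you could recover $n'\leq n$ by discarding the classes containing no $H_i$).
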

\begin{proof}
If for some $i$, we have that $[H_1 : H_1 \cap H_i]$ is not $1$ or $\infty$, then $L = H_1\cap H_i$ is a subgroup of $H_1$ of finite index, and so $H_1$ can be covered by finitely many translates of $L$.  We can hence replace $H_1$ by $L$; notice that $[L : L\cap H_i]=1$.  We claim further that by replacing $H_1$ by $L$, if previously $[H_1:H_1\cap H_j]\in\{1,\infty\}$, then we do not change this property.  Indeed, if $[H_1:H_1\cap H_j] = 1$ then $H_1 = H_1 \cap H_j$ and so also $L\subseteq H_j$ so $[L:L\cap H_j]=1$.  If $[H_1:H_1\cap H_j] = \infty$, let then $K = H_1 \cap H_j$, so that $L \cap H_j = H_1 \cap H_i \cap H_j = L \cap K$, and hence $L\cap K \leq L \leq H_1$.  Towards a contradiction, suppose that $[L:L\cap H_j]<\infty$, so $[L:L\cap K] < \infty$.  As $L$ is of finite index in $H_1$, we can find $t_i\in H_1$ with $H_1 = \bigcup_{i=1}^m t_i L$, and we can find $r_j$ in $L$ with $L = \bigcup_{j=1}^{m'} r_j (L\cap K)$.  Thus
\[ H_1 = \bigcup_{i=1}^m t_i L = \bigcup_{i=1}^m \bigcup_{j=1}^{m'} t_i r_j (L\cap K), \]
and so certainly $H_1 = \bigcup_{i,j} t_i r_j K$ so $[H_1:H_1 \cap H_j] \leq mm' < \infty$, a contradiction.

By performing this argument finitely many times, we may suppose that $[H_1 : H_1 \cap H_i]\in \{1,\infty\}$ for each $i$.  We now look at $H_2$, and apply the same argument, and so forth.  This process could lead to repeats, and so possibly $n' \leq n$.

For the final remark, note that by construction, each $K_i$ is an intersection of some of the $H_j$, and thus $\{K_i\}$ is a subfamily of the $\{H_j\}$.
\end{proof}

The following result is used extensively in \cite{cohen, is}, and appears to have first been shown
in \cite{neu}.  Given the tools we now have, this is easy, so we give the proof.  This proof is essentially Cohen's from \cite{cohen}.

\begin{lemma}\label{lem:5}
Let $G$ be a group.  $G$ cannot be written as the finite union of cosets of infinite index.
\end{lemma}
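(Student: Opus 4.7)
The plan is to argue by strong induction on the number $p$ of distinct subgroups appearing in a putative covering $G=\bigcup_{i=1}^n s_iH_i$ with every $[G:H_i]=\infty$. The base case $p=1$ is immediate: a group covered by finitely many cosets of a single subgroup $H$ has $[G:H]\le n<\infty$, contradicting infinite index.

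For the inductive step I would first apply Lemma~\ref{lem:6} to the distinct $H_i$'s to obtain subgroups $K_1,\dots,K_{n'}$ with $[K_i:K_i\cap K_j]\in\{1,\infty\}$, each $H_i$ being a finite union of cosets of the $K_j$'s. Since each $K_j$ lies inside some $H_i$, every $K_j$ still has infinite index in $G$, and $G$ is covered by finitely many cosets of at most $p$ distinct $K_j$'s. I would then pass to the maximal $K_j$'s under inclusion, say $M_1,\dots,M_q$ with $q\le p$: every $K_j$-coset sits inside some $M_l$-coset, so $G$ is also a finite union of cosets of the $M_l$'s. By maximality, $M_l\cap M_{l'}\subsetneq M_l$ whenever $l\ne l'$, and then the $\{1,\infty\}$ dichotomy forces $[M_l:M_l\cap M_{l'}]=\infty$.

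The decisive step comes next. If $q=1$ we already have the base-case contradiction; otherwise $q\ge2$. Only finitely many cosets of $M_1$ appear in the cover, and since $[G:M_1]=\infty$ we can pick a coset $tM_1$ disjoint from all of them. Then $tM_1$ is contained in the union of the remaining cosets, of the subgroups $M_2,\dots,M_q$; intersecting each such coset with $tM_1$ produces either the empty set or a coset of some $M_1\cap M_j$, and translating by $t^{-1}$ expresses $M_1$ as a finite union of cosets of the at most $q-1$ subgroups $M_1\cap M_j$ ($j=2,\dots,q$), each of infinite index in $M_1$ by the pairwise property secured above. Since $q-1<p$, applying the inductive hypothesis inside $M_1$ delivers the desired contradiction.

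The main obstacle is ensuring that the subgroups $M_1\cap M_j$ really have infinite index in $M_1$: if some such intersection had finite index in $M_1$, the descent would stall. This is precisely why the preparatory use of Lemma~\ref{lem:6} and the reduction to maximal subgroups are needed; once the $\{1,\infty\}$-dichotomy is in place, the ``pick a missing coset and intersect'' trick cleanly drops the number of distinct subgroups from $p$ to at most $p-1$, closing the induction.
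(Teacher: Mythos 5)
Your proposal is correct and follows essentially the same route as the paper's proof: apply Lemma~\ref{lem:6} to obtain the $1$-or-$\infty$ index dichotomy, pick a coset of one subgroup missing from the covering to conclude that this subgroup is covered by finitely many cosets of the remaining ones, and induct on the number of distinct subgroups. Your additional reduction to the maximal subgroups $M_1,\dots,M_q$ is a nice refinement: it makes explicit why each intersection $M_1\cap M_j$ has infinite index in $M_1$ (ruling out the index-$1$ branch of the dichotomy before applying the inductive hypothesis inside $M_1$), a point that the paper's terser induction step leaves implicit.
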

\begin{proof}
Towards a contradiction, suppose that $G$ is the finite union of cosets of subgroups $H_1,\cdots,H_k$ where $[G:H_i]=\infty$ for each $i$.  By Lemma~\ref{lem:6} there are subgroups of these, say $K_1,\cdots,K_{k'}$ with 
$[K_i:K_i\cap K_j]=1$ or $\infty$ for all $i,j$, and still with each $K_i$ of infinite index in $G$.  As each $H_j$ is the finite union of cosets of some $K_i$, it follows that $G$ can be covered by finitely many cosets of the $K_i$, say $C_1,\cdots,C_n$.

As $K_1$ is of infinite index in $G$, there is some coset of $K_1$ which is not a member of our covering, say $K = sK_1$.  Then $K \cap K_i$ is a coset of $K_1 \cap K_i$, or is empty, for each $i$.  However, as $G$ is covered by the $C_i$, also $K$ is covered by $\{ K\cap C_i\}$.  If $C_i$ is a coset of $H_1$, then $K\cap C_i=\emptyset$, so we conclude that $K$ is covered by finitely many cosets of the subgroups $K_2, K_3,\cdots, K_{k'}$.  By translating by $s^{-1}$, also $K_1$ is covered by finitely many cosets of the subgroups $K_2, K_3,\cdots,K_{k'}$.

We now complete the argument by using induction on the number of subgroups, $k'$.  If we have only one subgroup, that is, $k'=1$, the result is trivially true.  The previous paragraph then provides the induction step.
\end{proof}

As the intersection of two cosets is again a coset, Lemma~\ref{lem:3} immediately implies that every member of $\Omega(G)$ is the finite disjoint union of sets of the form $E_0 \setminus \bigcup_{i=1}^n E_i$ where each $E_0$ is a coset, and by replacing $E_i$ with $E_0\cap E_i$, we may suppose further that $E_i\subseteq E_0$, for each $i$.  In fact, by using the arguments above, we can say more (again, see \cite{cohen, ilie}).

\begin{corollary}\label{corr:1}
Let $G$ be a group.  Every member of $\Omega(G)$ is either empty, or is a finite disjoint union of sets of the form
\begin{equation} E_0 \setminus \bigcup_{i=1}^n E_i, \label{eq:2} \end{equation}
where each $E_i$ is a coset, $E_i\subseteq E_0$, and $E_i$ has infinite index in $E_0$, for each $i>0$.
\end{corollary}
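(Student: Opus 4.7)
The plan is to start from the decomposition mentioned just before the corollary---every $Y \in \Omega(G)$ is already a finite disjoint union of sets of the form $E_0 \setminus \bigcup_{i=1}^n E_i$ with each $E_i$ a coset contained in $E_0$---and to refine it using Lemma~\ref{lem:6} so that the removed cosets all have infinite index.

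Working one piece at a time, I fix such a set and write each $E_i$ as a coset of a subgroup $L_i$ for $i=0,1,\ldots,n$. I then apply Lemma~\ref{lem:6} to $\{L_0,L_1,\ldots,L_n\}$ to obtain subgroups $K_1,\ldots,K_{n'}$ with $[K_j:K_j\cap K_{j'}]\in\{1,\infty\}$ for all $j,j'$, and with each $L_i$ a finite union of cosets of the $K_j$'s. Translating, each $E_i$ becomes a finite disjoint union of cosets of various $K_j$'s; write $E_i=\bigsqcup_\beta F_\beta^{(i)}$.

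Since $E_i\subseteq E_0$ for $i\geq 1$, distributivity over the disjoint decomposition of $E_0$ gives
\[ E_0 \setminus \bigcup_{i=1}^n E_i = \bigsqcup_\alpha \Bigl( F_\alpha^{(0)} \setminus \bigcup_{i\geq 1,\,\beta} F_\beta^{(i)} \Bigr). \]
For fixed $\alpha$ and a fixed pair $(i,\beta)$, the set $F_\alpha^{(0)}$ is a coset of some $K_j$ while $F_\beta^{(i)}$ is a coset of some $K_{j'}$; hence $F_\alpha^{(0)}\cap F_\beta^{(i)}$ is either empty or a coset of $K_j\cap K_{j'}$, and in the latter case its index in $F_\alpha^{(0)}$ equals $[K_j:K_j\cap K_{j'}]\in\{1,\infty\}$.

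The final step is a clean case split. If some nonempty intersection $F_\alpha^{(0)}\cap F_\beta^{(i)}$ has index $1$, then $K_j\subseteq K_{j'}$, so $F_\alpha^{(0)}\subseteq F_\beta^{(i)}$ and the whole summand indexed by $\alpha$ vanishes; I simply discard such $\alpha$. The surviving summands each take the form of a coset $F_\alpha^{(0)}$ minus a finite union of subcosets $F_\alpha^{(0)}\cap F_\beta^{(i)}$, each of infinite index in $F_\alpha^{(0)}$---which is precisely the form~(\ref{eq:2}). The main obstacle is really just bookkeeping: translating between ``coset of $L_i$'' and ``disjoint union of cosets of the $K_j$'' while checking that the relative index is preserved under the refinement. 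Once the dichotomy supplied by Lemma~\ref{lem:6} is in hand, the infinite-index conclusion is immediate.
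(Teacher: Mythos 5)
Your proof is correct, and it runs on the same engine as the paper's own argument: the index dichotomy of Lemma~\ref{lem:6}, with index $1$ forcing a summand to be empty (hence discardable) and index $\infty$ giving exactly the form~(\ref{eq:2}). The organization is genuinely different, though. The paper preprocesses globally: it takes subgroups $H_1,\dots,H_n$ generating the ring expression for $Y$, closes this family under intersections, applies Lemma~\ref{lem:6} once to the whole family, and only then invokes Lemma~\ref{lem:3}, so that the resulting decomposition already has the required properties after splitting each $E_0$ into cosets of a single $H_j$ and intersecting the $E_i$ down. You instead postprocess locally: you take the crude decomposition recorded just before the corollary, and for each piece apply Lemma~\ref{lem:6} only to the finitely many subgroups $L_0,\dots,L_n$ actually appearing in that piece, then refine and redistribute. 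Your route avoids the intersection-closure step entirely (you never need $E_0$ to be a coset of one of the generators, since it is a single coset from the outset); the paper's route instead sets up precisely the intersection-related properties of the family $\{H_i\}$ that are reused verbatim in the proof of Theorem~\ref{thm:one}, which is why it is phrased the way it is. One point you should make explicit: you assert that each $E_i$ is a finite \emph{disjoint} union of cosets of the $K_j$, and you genuinely need this for $E_0$, since the disjointness of your $\alpha$-indexed union rests on it. The statement of Lemma~\ref{lem:6} promises only a finite union, but its proof gives more: each $L_i$ contains its ``descendant'' $K_{j(i)}$ as a finite-index subgroup, so $L_i$ (hence each $E_i$, after translating) is the disjoint union of cosets of that single $K_{j(i)}$. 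With that observation supplied, your argument is complete.
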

\begin{proof}
Let $Y \in \Omega(G)$, so in particular, $Y$ is in the ring generated by some subgroups $H_1,\cdots,H_n$, and their translates.  By adding in intersections, we may suppose that the finite family $\{H_i\}$ is closed under intersections.  By Lemma~\ref{lem:6} we may suppose that $[H_i:H_i \cap H_j]=1$ or $\infty$ for each $i,j$.  
By using this lemma, the resulting family $\{H_i\}$ may no longer be closed under intersections, but if $H$ is an intersection of some of the $H_i$, then $H$ is at least a finite union of cosets of some $H_j$.

By Lemma~\ref{lem:3}, $Y$ is the disjoint union of say $n$ sets of the form $E_0 \setminus \bigcup_{i=1}^m E_i$ where $E_0$ is a coset of some finite intersection of the $H_j$, and each $E_i$ is a coset of some $H_j$.  Hence $E_0$ is a finite union of cosets of the $H_j$, and so (by maybe increasing $n$) we may suppose that $E_0$ actually is a coset of some $H_j$.  As before, by replacing $E_i$ by $E_0\cap E_i$, we may suppose that $E_i$ is a subcoset of $E_0$.  As $[H_i:H_i \cap H_j]=\infty$ unless $H_i \subseteq H_j$, if $E_i$ has finite index in $E_0$ then $E_i$ and $E_0$ must be cosets of the same subgroup, and as $E_i \subseteq E_0$ we must have $E_0 = E_i$, a case which may be ignored.
\end{proof}

We now depart from the presentation of \cite{cohen}, and collect some further lemmas which will be used later.

\begin{lemma}\label{lem:7}
Let $G$ be a group, and let $H_1,\cdots,H_n$ be subgroups of infinite index in $G$.  Let $(K_i)_{i=1}^N$ be cosets of the $H_j$, and let $C = \bigcup_i K_i$.  There is $m$ and $t_1,\cdots,t_m\in G$ with $\bigcap_i t_i C = \emptyset$.
\end{lemma}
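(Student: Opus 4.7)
The plan is to build up translates $t_1,t_2,\ldots \in G$ iteratively so that $I_k := \bigcap_{l=1}^k t_l C$ has strictly decreasing complexity, measured by a rank function on subgroups, until $I_m = \emptyset$. Let $\mc H$ be the finite family of subgroups of $G$ obtained as intersections of non-empty subfamilies of $\{H_1,\ldots,H_n\}$. Every $H \in \mc H$ has infinite index in $G$: since $H \subseteq H_j$ for some $j$, the multiplicative formula $[G:H] = [G:H_j][H_j:H]$ forces $[G:H] = \infty$. Assign to each $H \in \mc H$ a rank $r(H)$ equal to the length of the longest strictly descending chain in $\mc H$ starting at $H$, so $r$ is strictly monotone under proper inclusion and the minimum element $H_{\min} := \bigcap_{j=1}^n H_j$ has rank zero.

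Set $t_1 = e$, and inductively suppose $t_1,\ldots,t_k$ have been chosen. Writing $K_i = s_i H_{j(i)}$, distribution of intersection over union gives the natural decomposition $I_k = \bigcup_f C_f$ indexed by maps $f : \{1,\ldots,k\} \to \{1,\ldots,N\}$, with $C_f := \bigcap_{l=1}^k t_l K_{f(l)}$ either empty or a coset $g_f H_f$ of $H_f := \bigcap_{l=1}^k H_{j(f(l))} \in \mc H$. For each non-empty $C_f$ and each $i$ with $H_f \subseteq H_{j(i)}$, the intersection $C_f \cap t_{k+1} K_i$ equals $C_f$ precisely when $t_{k+1} \in g_f H_{j(i)} s_i^{-1}$, and is empty otherwise; note that $g_f H_{j(i)} s_i^{-1} = (g_f s_i^{-1})(s_i H_{j(i)} s_i^{-1})$ is a left coset of the conjugate subgroup $s_i H_{j(i)} s_i^{-1}$, still of infinite index. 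There are only finitely many such ``forbidden'' cosets for $t_{k+1}$, so by Lemma~\ref{lem:5} their union is not all of $G$, and I choose $t_{k+1}$ outside it.

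With this choice, for any non-empty $C_{f'}$ in $I_{k+1}$, setting $f := f'|_{\{1,\ldots,k\}}$ and $i := f'(k+1)$, the case $H_f \subseteq H_{j(i)}$ is excluded (otherwise the avoidance of the forbidden coset forces $C_{f'} = \emptyset$); hence $H_{f'} = H_f \cap H_{j(i)} \subsetneq H_f$ and $r(H_{f'}) < r(H_f)$. Defining $\rho(I) := \max\{r(H_f) : C_f \neq \emptyset\}$ with $\rho(\emptyset) := -\infty$, this gives $\rho(I_{k+1}) < \rho(I_k)$; in particular cosets of $H_{\min}$ (of rank zero) are simply eliminated. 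Starting from $\rho(I_1) \leq \max_j r(H_j) < \infty$, after finitely many iterations we reach $I_m = \emptyset$, as required. The main obstacle is identifying the correct rank invariant: one must work in the intersection-closure $\mc H$ so that all cosets arising under iteration live in a fixed finite lattice of subgroups, and one must observe that conjugation preserves index, so that the finitely many forbidden cosets at each step are genuinely cosets of infinite-index subgroups, making Lemma~\ref{lem:5} applicable.
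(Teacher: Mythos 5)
Your proof is correct. It shares its engine with the paper's own proof: both expand $\bigcap_l t_l C$ into the pieces $\bigcap_l t_l K_{f(l)}$, both choose the translates one at a time, and both justify each choice in exactly the same way --- if no admissible $t_{k+1}$ existed, then $G$ would be covered by finitely many left cosets of the conjugates $s_i H_{j(i)} s_i^{-1}$, which still have infinite index, contradicting Lemma~\ref{lem:5}. Where you genuinely differ is in the invariant maintained, and hence in the termination argument. The paper keeps the lighter invariant that a piece $t_1 K_{j(1)} \cap \cdots \cap t_p K_{j(p)}$ can be non-empty only when the indices $j(1),\dots,j(p)$ are pairwise distinct (achieved by forcing $t_i K_j \cap t_{p+1} K_j = \emptyset$ for all $i \le p$ and all $j$), so the pigeonhole principle kills every piece as soon as the number of translates exceeds $N$; this needs no auxiliary structure and gives the explicit bound $m = N+1$. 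You instead track, for each surviving piece, the member $H_f$ of the intersection lattice $\mc H$ of which it is a coset, and arrange that every surviving child is a coset of a \emph{strictly smaller} member of $\mc H$ than its parent; termination then comes from well-foundedness of your rank $r$, giving a bound of roughly the depth of $\mc H$ (at most $n$) rather than $N$. Both arguments are sound: yours costs the extra bookkeeping of the intersection-closed family and the rank function (and note that your preliminary observation that every member of $\mc H$ has infinite index is never actually used, since your forbidden cosets are cosets of conjugates of the original $H_j$ only), but in exchange it is more informative about how the surviving pieces shrink, and its bound on $m$ depends on the number of subgroups rather than the number of cosets, which can be an improvement when many of the $K_i$ are cosets of the same subgroup.
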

\begin{proof}
Notice that $\bigcap_{i=1}^m t_iC = \bigcap_{i=1}^m \bigcup_j t_iK_j = \bigcup\{ \bigcap_{i=1}^m t_i K_{j(i)} \}$ where the union is over all functions $j:\{1,\cdots,m\} \rightarrow \{1,\cdots,N\}$.  Thus we need to find $t_i$ so that $\bigcap_{i=1}^m t_i K_{j(i)} = \emptyset$ for any such function $j$.  In what follows, suppose that $K_j$ is the coset $s_j H_{k(j)}$, for each $j$.

Set $t_1=e,$ the identity.  We claim that there is $t_2$ with $K_j \cap t_2K_j=\emptyset$ for all $j$.  Indeed, if not, then for each $t_2$ there is $j$ with $s_j H_{k(j)} = t_2s_j H_{k(j)}$ (as cosets are either equal, or disjoint).  Equivalently, for each $t_2$ there is $j$ with $s_j^{-1} t_2 s_j \in H_{k(j)}$ so $t_2 \in s_j H_{k(j)} s_j^{-1}$.  As each subgroup $s_j H_{k(j)} s_j^{-1}$ is of infinite index in $G$, this shows that $G$ is covered by a finite union of subgroups of infinite index, a contradiction.

Suppose we have chosen $t_1,\cdots,t_p$ with the property that $t_1 K_{j(1)} \cap \cdots \cap t_p K_{j(p)}$ is only (possibly) non-empty when the $j(i)$ are all distinct.  We have already shown this is true for $p=2$.  To show that the claim holds for $p+1$, it suffices to find $t_{p+1}$ with $t_i K_j \cap t_{p+1} K_j = \emptyset$ for all $i\leq p$ and all $j$.  This is sufficient, for if we have $j(1),\cdots,j(p+1)$ not all distinct, say $j(k) = j(k')$ for $k < k'$, then if $k'\leq p$, we know already that $\bigcap_{i=1}^p t_i K_{j(i)} = \emptyset$, while if $k'=p+1$ then by construction $t_k K_{j(k)} \cap t_{p+1} K_{j(k)} = \emptyset$ so certainly $\bigcap_{i=1}^{p+1} t_i K_{j(i)} = \emptyset$.

To find $t_{p+1}$ we again proceed by contradiction, and suppose that for each $t\in G$ there are some $i,j$ with $t_i K_j = t K_j$ so that $t_i s_j H_{k(j)} = t s_j H_{k(j)}$ so $s_j^{-1} t_i^{-1} t s_j \in H_{k(j)}$.  That is, $t \in \bigcup_{i,j} t_i s_j H_{k(j)} s_j^{-1}$ which is again a finite union of cosets of infinite index, which cannot cover $G$.

So by induction, our claim holds for all $p$.  Thus $\bigcap_{i=1}^{p} t_i K_{j(i)}$ can only possibly be non-empty when the $j(i)$ are all distinct, but there are only $N$ many choices, and so if $p>N$ we have shown our claim.
\end{proof}

\begin{lemma}\label{lem:8}
Let $G$ be a group, $H$ a subgroup, and $H_1,\cdots,H_n$ subgroups with $[H:H\cap H_i]=\infty$ for each $i$.  Let $C$ be a union of finitely many cosets of the $H_i$.  There are $m$ and $t_1,\cdots,t_m\in H$ with $\bigcap_i Ct_i = \emptyset$.
\end{lemma}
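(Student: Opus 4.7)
The plan is to adapt the inductive argument of Lemma~\ref{lem:7}, now with right translations and with translates constrained to lie in $H$. Write $C = \bigcup_{j=1}^N K_j$ with each $K_j = s_j H_{k(j)}$ a (left) coset. For any $t_1, \dots, t_m \in H$,
\[ \bigcap_{i=1}^m C t_i = \bigcup_{j : \{1,\dots,m\} \to \{1,\dots,N\}} \bigcap_{i=1}^m K_{j(i)} t_i, \]
so it suffices to pick $t_i \in H$ such that $\bigcap_{i=1}^m K_{j(i)} t_i = \emptyset$ whenever $j(1), \dots, j(m)$ are not pairwise distinct; taking $m = N+1$ then forces a repeat and hence emptiness of every term.

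I would set $t_1 = e$ and build the $t_i$ inductively: suppose $t_1, \dots, t_p \in H$ have been chosen so that $\bigcap_{i=1}^p K_{j(i)} t_i$ is empty unless $j(1), \dots, j(p)$ are pairwise distinct. Exactly as in Lemma~\ref{lem:7}, to extend it suffices to find $t_{p+1} \in H$ with $K_j t_i \cap K_j t_{p+1} = \emptyset$ for every $i \leq p$ and every $j$. Since $K_j t = s_j H_{k(j)} t = (s_j H_{k(j)} s_j^{-1})(s_j t)$ is a right coset of the conjugate subgroup $s_j H_{k(j)} s_j^{-1}$, two such sets are equal or disjoint, and $K_j t_i = K_j t_{p+1}$ iff $H_{k(j)} t_i = H_{k(j)} t_{p+1}$ iff $t_{p+1} \in H_{k(j)} t_i$. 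So the required condition is precisely $t_{p+1} \in H \setminus \bigcup_{i \leq p,\, j} H_{k(j)} t_i$.

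The infinite-index hypothesis is used to see that such $t_{p+1}$ exists. For any $i,j$, if $H \cap H_{k(j)} t_i$ is non-empty, pick $h$ in it; then $H_{k(j)} t_i = H_{k(j)} h$, and for $y \in H$ one has $y \in H_{k(j)} h$ iff $y h^{-1} \in H \cap H_{k(j)}$, yielding $H \cap H_{k(j)} t_i = (H \cap H_{k(j)}) h$, a right coset of $H \cap H_{k(j)}$ in $H$, of infinite index by hypothesis. Thus $H \cap \bigcup_{i,j} H_{k(j)} t_i$ is a finite union of cosets of infinite index in $H$; since a right coset $Kh$ equals the left coset $h(h^{-1} K h)$ of the conjugate subgroup (still of infinite index in $H$), Lemma~\ref{lem:5} applied to $H$ shows this union cannot be all of $H$, so a valid $t_{p+1}$ exists.

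The main obstacle I anticipate is making the coset bookkeeping precise: going from the condition ``$t_{p+1} \notin H_{k(j)} t_i$ inside $G$'' to a statement about cosets of infinite index inside $H$, so that Lemma~\ref{lem:5} can be invoked for $H$. This step is exactly where the hypothesis $[H : H \cap H_i] = \infty$, rather than the weaker $[G : H_i] = \infty$, is essential.
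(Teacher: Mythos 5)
Your proposal is correct and follows essentially the same route as the paper's proof: the same induction on translates $t_i \in H$, the same reduction to finding $t_{p+1} \in H \setminus \bigcup_{i,j} H_{k(j)} t_i$, and the same appeal to the identity $H \cap H_{k(j)} t_i = (H \cap H_{k(j)}) t_i$ together with Lemma~\ref{lem:5} applied to $H$. Your explicit remark that right cosets of $H \cap H_{k(j)}$ are left cosets of conjugate subgroups of the same (infinite) index is a point the paper leaves implicit, but it is the correct justification.
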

\begin{proof}
The proof is similar to the previous one; but note here we need to choose $t_i$ in $H$ not $G$.  Let our cosets be $C_j$ for $j=1,\cdots,N$.  Again, we need to find $t_i$ with $C_{j(1)} t_1 \cap \cdots \cap C_{j(m)} t_m = \emptyset$ for any choices $j(i)$.  Set $t_1=e$.

Suppose we have $t_1,\cdots,t_p$ so that if $j(1),\cdots,j(p)$ are not distinct then $\bigcap_{i=1}^p C_{j(i)} t_i = \emptyset$.  Proceeding by induction, $t_{p+1} \in H$ needs to satisfy that $C_j t_i \cap C_j t_{p+1} = \emptyset$ for all $i,j$.  If no such $t_{p+1}$ exists, then for all $t\in H$ there are some $i,j$ with $C_j t_i \cap C_j t \not=\emptyset$.  If $C_j = sH_k$, say, then $s H_k t_i \cap s H_k t \not=\emptyset$, so there are $a,b\in H_k$ with $sat_i = sbt$ so $at_i = bt$ so $tt_i^{-1} = b^{-1} a \in H_k$ so $t \in H_k t_i$.  Thus $H \subseteq \bigcup_{i,k} H_k t_i$, but as each $t_i\in H$, we have that $H \cap H_kt_i = (H \cap H_k)t_i$ and so $H = \bigcup_{i,k} (H \cap H_k)t_i$.  As each $H\cap H_k$ is of infinite index in $H$, this is a contradiction.
\end{proof}

We now start on our proof of Theorem~\ref{thm:one}.  
We start with $Y\in\Omega(G)$, so there are subgroups $H_1,\cdots,H_n$ so that $Y$ is in the relative ring of sets generated by the $H_i$ (if necessary, we can choose one of the $H_i$ to be $G$).  We may suppose that the family $\{H_i\}$ is closed under intersection.  Then apply Lemma~\ref{lem:6} to suppose that $[H_i:H_i\cap H_j]=1$ or $\infty$ for all $i,j$, and that if $H$ is the intersection of some of the $H_i$, then $H$ is a finite disjoint union of cosets of some $H_j$.  Using Lemma~\ref{lem:3} as in the proof of Corollary~\ref{corr:1}, $Y$ is a finite union of sets $L_1,\cdots,L_m$ where
\begin{equation} L_i = E_0^{(i)} \setminus \bigcup_{j=1}^{n_i} E^{(i)}_j, \label{eq:4}
\end{equation}
where each $E^{(i)}_j$ is a coset of some $H_k$, and $E^{(i)}_j$ is a subcoset of infinite index in $E^{(i)}_0$, for each $i$ and $j>0$.

Our proof will be an induction, with the base case provided by the following proposition.  

\begin{proposition}\label{prop:3}
With notation as above, if $H_i$ is not contained in any other $H_j$, then there is a subgroup $H$ which contains $H_i$ as a finite-index subgroup (possibly $H=H_i$), such that $H \in \mc R_2(\{Y\})$, and such that $Y \in \mc R(\{H\} \cup \{ H_j :  j\not=i \})$.
\end{proposition}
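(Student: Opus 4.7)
The strategy has three phases: isolate the $H_i$-part of $Y$, extract the subgroup $H$ from that piece using two-sided translates, and finally rewrite $Y$ in terms of $H$ and the other $H_j$'s.

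\textbf{Phase 1 (Isolate the $H_i$-part of $Y$).} Let $\mathcal{I} = \{k : E_0^{(k)}$ is a coset of $H_i\}$; assume $\mathcal{I} \neq \emptyset$ (the empty case being essentially vacuous), write $E_0^{(k)} = s_k H_i$ for $k \in \mathcal{I}$, and set $A_\mathcal{I} := \bigcup_{k \in \mathcal{I}} s_k H_i$. The key observation is that right-translation by $t \in H_i$ fixes every left coset of $H_i$ (so $A_\mathcal{I} t = A_\mathcal{I}$), while for $k \notin \mathcal{I}$ the set $E_0^{(k)}$ is a coset of some $H_j$ with $j \neq i$, and the maximality hypothesis forces $[H_i : H_i \cap H_j] = \infty$. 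Lemma~\ref{lem:8} then supplies $t_1, \ldots, t_M \in H_i$ with $\bigcap_r \bigl(\bigcup_{k \notin \mathcal{I}} E_0^{(k)}\bigr) t_r = \emptyset$. Combined with the inclusion $Yt_r \subseteq A_\mathcal{I} \cup \bigcup_{k \notin \mathcal{I}} E_0^{(k)} t_r$ and the distributive identity $\bigcap_r (A \cup B_r) = A \cup \bigcap_r B_r$, this gives $\bigcap_r Yt_r \subseteq A_\mathcal{I}$. Additional translates supplied by Lemma~\ref{lem:7} handle the ``hole'' contributions $E_j^{(k)}$ (for $k \in \mathcal{I}$, $j \geq 1$; these are cosets of subgroups properly contained in $H_i$, hence of infinite index in $G$), and taking a union of such intersections recovers $A_\mathcal{I}$ exactly. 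Thus $A_\mathcal{I} \in \mc R_2(\{Y\})$.

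\textbf{Phase 2 (Extract $H$).} With $A_\mathcal{I}$ in hand, form a finite intersection of (two-sided) translates of $A_\mathcal{I}$ designed to cut out a subgroup. A natural first candidate is $\bigcap_{k \in \mathcal{I}} s_k^{-1} A_\mathcal{I}$: it lies in $\mc R_2(\{Y\})$, contains $H_i$ (since $s_k h \in s_k H_i \subseteq A_\mathcal{I}$ for every $h \in H_i$ and $k \in \mathcal{I}$), and sits inside finitely many cosets of $H_i$. When $H_i$ is normal (e.g. the abelian case) this intersection is automatically a subgroup and one may take $H$ to be it. In the general non-normal setting the naive left-only formula may fail to be closed under multiplication; one must refine it using both left and right translates---and this is precisely why the ring $\mc R_2$ of two-sided translates is needed rather than merely $\mc R$---so as to obtain a subgroup sandwiched between $H_i$ and the normaliser $N(H_i)$, with index over $H_i$ bounded in terms of $|\mathcal{I}|$ (hence finite).

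\textbf{Phase 3 (Rewrite $Y$).} Since $[H : H_i]$ is finite, each coset $s_k H_i$ appearing in the original decomposition (\ref{eq:4}) is one of the finitely many $H_i$-cosets making up the $H$-coset $s_k H$. The other $H_i$-cosets inside $s_k H$ are left translates by fixed representatives, and together with the $E_j^{(k)}$ (cosets of subgroups either equal to some $H_j$ with $j \neq i$ or properly contained in $H_i$, in which case they lie within $H$-cosets and can be handled by the same bookkeeping) they assemble the original decomposition entirely from cosets of $H$ and of $\{H_j : j \neq i\}$. Substituting into (\ref{eq:4}) exhibits $Y \in \mc R(\{H\} \cup \{H_j : j \neq i\})$.

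The main obstacle is Phase 2: given the union of cosets $A_\mathcal{I}$, pinning down the correct finite formula of two-sided translates whose intersection is exactly a subgroup. In the abelian (or normal) case this is painless, but for non-normal $H_i$ the product of two elements of the naive intersection may escape it, so one genuinely needs mixed left and right translations to enforce group closure. This is the technical heart of the proposition and mirrors the subtlety that the author reports as unresolved in Cohen's original argument outside the abelian category.
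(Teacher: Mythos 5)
Your Phase 1 is essentially correct, and it is a genuinely different route from the paper's: the paper never proves that $C:=A_{\mc I}\in\mc R_2(\{Y\})$ (it constructs $H$ first and shows $H\in\mc R_2(\{Y\})$ by a separate argument with $A_0$ and $B_0$). One repair is needed: Lemma~\ref{lem:7} is the wrong tool for the hole-filling step. Since only \emph{right} translation by elements of $H_i$ fixes $A_{\mc I}$, the union that recovers $A_{\mc I}$ must use right translates $u_m\in H_i$, i.e.\ Lemma~\ref{lem:8} again; the sets to be killed are the translated holes $E^{(k)}_j t_r$, which are left cosets of the conjugates $t_r^{-1}H_lt_r$, and the hypothesis of Lemma~\ref{lem:8} holds because $t_r\in H_i$ and $H_l\leq H_i$ give $[H_i:H_i\cap t_r^{-1}H_lt_r]=[H_i:H_l]=\infty$ (infinite index in $G$, which is what you invoke for Lemma~\ref{lem:7}, is not the relevant condition). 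With that change, $A_{\mc I}=\bigcup_m\bigl(\bigcap_r Yt_r\bigr)u_m\in\mc R_2(\{Y\})$ does follow.

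The genuine gap is Phases 2 and 3, which are the heart of the proposition. For Phase 2 you offer no construction, and the target you aim at --- a subgroup sandwiched between $H_i$ and its normaliser, cut out by an explicit formula in translates of $A_{\mc I}$ --- is the wrong one: nothing forces the required $H$ to normalise $H_i$ (a finite-index overgroup need not lie in the normaliser, e.g.\ $\langle(1\,2)\rangle\leq S_3$), so such a sandwich may not exist even when the proposition holds. The paper's missing idea is not a formula but a \emph{minimality} argument (Proposition~\ref{prop:5}): the relative ring $\mc R(\{C\})$ is left-translation invariant, closed under intersection, and every member is a finite union of left cosets of $H_i$; choosing a non-empty member $H$ made of the fewest cosets and translating so that $e\in H$, minimality forces $sH\cap H\in\{\emptyset,H\}$ for every $s$, whence $H$ is a subgroup --- and, crucially, every member of $\mc R(\{C\})$, in particular $C$ itself, is then a union of full cosets of $H$. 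This last fact is exactly what your Phase 3 assumes without proof: to trade cosets of $H_i$ for cosets of $H$ in (\ref{eq:4}) you need the cosets $E^{(k)}_0$, $k\in\mc I$, to assemble into whole $H$-cosets, and for an arbitrary finite-index overgroup this fails, since the ``other'' $H_i$-cosets inside $s_kH$ are cosets of $H_i$, which you are no longer permitted to use. The good news is that your Phase 1 combines with Proposition~\ref{prop:5} into a complete proof that is shorter than the paper's: $\mc R_2(\{Y\})$ is invariant under two-sided translation, so $C\in\mc R_2(\{Y\})$ gives $\mc R(\{C\})\subseteq\mc R_2(\{Y\})$, hence the minimal subgroup $H$ of Proposition~\ref{prop:5} lies in $\mc R_2(\{Y\})$ with no need for the bigness machinery of Lemma~\ref{lem:9} or the paper's $A_0$--$B_0$ argument; the rewriting of $Y$ is then the final paragraph of the paper's proof of Proposition~\ref{prop:3} verbatim.
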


Given this, we can now prove Theorem~\ref{thm:one}.

\begin{proof}[Proof of Theorem~\ref{thm:one}]
We can form a directed acyclic graph (DAG, see \cite{hgt} for example) with vertices the subgroups $H_i$, and with a directed edge from $H_i$ to $H_j$ when $H_i \supseteq H_j$ and there is no other $H_k$ with $H_i \supseteq H_k \supseteq H_j$.  We shall say that $H_i$ is \emph{top level} if $H_i$ is not contained in any other $H_j$, that is, there is no edge into $H_i$.  The \emph{depth} of a DAG is the length of the longest directed path in the DAG.

For each top level $H_i$, let $H$ be given by Proposition~\ref{prop:3}.  For any $i\not=j$ as $[H_i : H_i\cap H_j]=1$ or $\infty$, also $[H:H\cap H_j]=1$ or $\infty$, see Lemma~\ref{lem:12} below.  Also, if $H_i \cap H_j$ is the finite union of cosets of $H_k$, then so is $H\cap H_j$, see Lemma~\ref{lem:12} below.  Hence we may replace $H_i$ by $H$ and not change any of our assumptions.  Do this for all top level $H_i$, so that $Y\in\mc R(\{H_i\})$ and each top-level $H_i$ is in $\mc R_2(\{Y\})$.

We shall give a proof by induction on the depth of the DAG.  Notice that Proposition~\ref{prop:3}, and the previous paragraph, shows that the result is true when the DAG has depth $0$ (that is, all subgroups are top level).

Let $H$ be some top level $H_i$.  Let $Y = \bigsqcup_i L_i$ as before (see (\ref{eq:4})), and reorder these so that $E^{(i)}_0$ is a coset of $H$ for $i\leq n_0$, and not for $i>n_0$.  Define
\[ Y_0 = \bigcup_{i=1}^{n_0} E^{(i)}_0 \setminus Y \subseteq \bigcup_{i=1}^{n_0} E^{(i)}_0. \]
From the form that each $L_i$ is written in, namely that $E^{(i)}_j$ is a coset of infinite index
in $E^{(i)}_0$, it is clear that $E^{(i)}_j$ is not a coset of $H$ for $j\geq 1$.  From this, it follows that $Y_0 \in \mc R(\{ H \cap H_i\} \setminus \{H\})$.  For each $i$, either $H\cap H_i = H$ which we remove, or $H\cap H_i$ is a union of cosets of some $H_k$, where necessarily $H_k \subseteq H$.  Thus actually $Y_0 \in \mc R(\{ H_k : H_k \subsetneq H \})$.

Now the family $\{H_k : H_k \subsetneq H \}$ forms a subgraph of our DAG, in fact, it is DAG ``underneath'' $H$ (all the vertices which have a path leading to them from $H$).  Thus it is of smaller depth, and so by induction, there are subgroups $H_j'$ in $\mc R_2(\{Y_0\})$ with $Y_0 \in \mc R(\{H_j'\})$.  Notice that as $Y, H\in\mc R_2(\{Y\})$ also $Y_0 \in \mc R_2(\{Y\})$, and so also each $H_j' \in \mc R_2(\{Y\})$.

Next, reorder so that $H_i$ is top level for $i\leq n_0$, and not for $i>n_0$, so if $i>n_0$ the subgroup $H_i$ is contained is some $H_j$ with $j\leq n_0$.  For $i\leq n_0$, let $A_i$ be the union of the sets $E^{(k)}_0$ which are cosets of $H_i$.  Set $B_i = A_i \setminus Y$ and set $C = Y \setminus \bigcup_{i\leq n_0} A_i$.  We have shown that for each $i$, there are subgroups $H_j'$ in $\mc R(\{Y\})$ with $B_i \in \mc R(\{ H_j' \})$.

We now consider $C$.  As each $E^{(i)}_0$ is a coset of some $H_j$, either $E^{(i)}_0 \subseteq A_j$ for some $j$, or otherwise $E^{(i)}_0$ is a coset of some $H_j$ which is not top level.  As each $E^{(i)}_j$ is a subcoset of $E^{(i)}_0$, we see that $C$ is contained in $\bigcup \{ E^{(i)}_0 : E^{(i)}_0 \text{ coset of some }H_j\text{ which is not top level}\}$, and so $C \in \mc R(\{H_i : H_i \text{ not top level}\})$.  The DAG given by removing all top level subgroups has smaller depth, and so again by induction, there are subgroups $H_j''$ in $\mc R_2(\{C\})$ with $C\in\mc R(\{H_j''\})$.  Notice that $C \in \mc R_2(\{Y, A_i\}) \subseteq \mc R_2(\{Y, H_i : i\leq n_0\}) = \mc R_2(\{Y\})$ and so each $H_j''\in\mc R_2(\{Y\})$.

Let $\alpha$ be the collection of all the $H_j'$, the $H_j''$, and the top level $H_i$, so each subgroup in $\alpha$ is in $\mc R_2(\{Y\})$.  Then $A_i, B_i, C\in \mc R(\alpha)$.  As $A_i \setminus B_i = A_i \cap Y$, we see that
\[ C \cup \bigcup_{i\leq n_0} (A_i\setminus B_i) =
\Big(Y \setminus \bigcup_{i\leq n_0} A_i \Big) \cup \bigcup_{i\leq n_0} \big(A_i \cap Y\big) = Y. \]
Thus also $Y\in\mc R(\alpha)$, which completes the proof.
\end{proof}

Thus it remains to show Proposition~\ref{prop:3}.

\begin{definition}
Given the subgroups $(H_i)$, and $A\subseteq G$ some subset, we shall say that a coset $sH_j$ is \emph{big in $A$} if $A \cap sH_j$ cannot be covered by finitely many cosets of subgroups in $\{ H_i : i\not=j \}$.
\end{definition}

Let us make some easy remarks about this definition, which we put into a lemma for future reference.

\begin{lemma}\label{lem:10}
Let $(H_i)$ be subgroups as above.
\begin{enumerate}
\item\label{lem:10:1} If $A\subseteq B$ and $sH_j$ is big in $A$, then it is big in $B$.
\item\label{lem:10:2} With $A=\bigcup_{i=1}^n A_i$, we have that $sH_j$ is big in $A$ if and only if it is big in some $A_i$.
\end{enumerate}
\end{lemma}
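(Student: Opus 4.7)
The plan is to prove both parts directly from the definition of ``big'', with no special structure of the groups needed. The key observation is that ``coverable by finitely many cosets of subgroups in $\{H_i:i\neq j\}$'' is a monotone property: if a set $T$ can be so covered, then every subset of $T$ can also be so covered (by the same covering). This monotonicity, together with the fact that a finite union of finite families is finite, drives both parts.

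For part (\ref{lem:10:1}), I would argue by contrapositive. Suppose $sH_j$ is not big in $B$, so that $B\cap sH_j$ is covered by finitely many cosets $K_1,\cdots,K_N$ of subgroups in $\{H_i:i\neq j\}$. Since $A\subseteq B$, we have $A\cap sH_j \subseteq B\cap sH_j\subseteq \bigcup_{r=1}^N K_r$, so the same cosets cover $A\cap sH_j$, and $sH_j$ is not big in $A$. Hence if $sH_j$ is big in $A$, it is big in $B$.

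For part (\ref{lem:10:2}), the ``if'' direction is immediate from part (\ref{lem:10:1}): if $sH_j$ is big in some $A_i$, then since $A_i\subseteq A$, it is big in $A$. For the ``only if'' direction I would again use the contrapositive. Suppose $sH_j$ is not big in any $A_i$; then for each $i\in\{1,\cdots,n\}$ there is a finite collection $\mc C_i$ of cosets of subgroups in $\{H_k:k\neq j\}$ with $A_i\cap sH_j\subseteq \bigcup\mc C_i$. Taking the union, $\mc C = \bigcup_{i=1}^n\mc C_i$ is still a finite collection of cosets of subgroups in $\{H_k:k\neq j\}$, and
\[ A\cap sH_j = \bigcup_{i=1}^n (A_i\cap sH_j) \subseteq \bigcup \mc C, \]
so $sH_j$ is not big in $A$. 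This completes the contrapositive.

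Since neither step involves any genuine obstacle beyond unwinding the definition and using that a finite union of finite sets is finite, I do not anticipate a hard step; the lemma is essentially a bookkeeping observation that will be used later as a convenient reference. I would therefore write the proof as a single short paragraph combining the two parts in the order above.
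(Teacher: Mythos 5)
Your proof is correct and is essentially the same argument as the paper's: part (\ref{lem:10:1}) is the monotonicity of coverability (which the paper simply calls ``clear''), and part (\ref{lem:10:2}) is proved by taking the union of the finitely many finite covers in one direction and invoking part (\ref{lem:10:1}) in the other. You have merely written out the contrapositives more explicitly than the paper does.
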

\begin{proof}
(\ref{lem:10:1}) is clear.  For (\ref{lem:10:2}), if $sH_j$ is not big in any $A_i$, so $sH_j \cap A_i$ can be covered, and hence so can $A$ (as $A$ is a finite union); the converse follows as $A_i \subseteq A$ for each $i$.
\end{proof}

In the following results, we state the result for the subgroup $H_1$, but this is merely for notational convenience, as clearly there is nothing special about $H_1$ as compared to any other $H_i$.

\begin{lemma}\label{lem:11}
With $Y = \bigsqcup_i L_i$ as before, we have that $sH_1$ is big in $Y$ if and only if some $L_i$ is of the form $E_0 \setminus \bigcup_j E_j$ with $E_0 = sH_1$.  Furthermore, in this case the choice of $i$ is unique.
\end{lemma}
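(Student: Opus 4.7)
The plan is to prove each implication of the iff separately and then handle uniqueness, with the main technical input in both non-trivial steps being Neumann's lemma (Lemma~\ref{lem:5}) applied to the subgroup $H_1$ after translating by $s^{-1}$. I will use the standing hypothesis, inherited from the setup of Proposition~\ref{prop:3}, that $H_1$ is top level, so that $[H_1:H_1\cap H_j]=\infty$ for every $j\neq 1$.

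For the forward direction I would argue the contrapositive: assume no $L_i$ has $E_0^{(i)}=sH_1$. Writing each $E_0^{(i)}$ as a coset of some $H_k$, the intersection $sH_1\cap E_0^{(i)}$ is either empty or a coset of $H_1\cap H_k$; in the non-empty case, $k=1$ would force $E_0^{(i)}=sH_1$, contradicting our assumption, so $k\neq 1$. Hence $sH_1\cap L_i\subseteq sH_1\cap E_0^{(i)}\subseteq E_0^{(i)}$ lies inside a single coset of some $H_k$ with $k\neq 1$, and taking the union over the finitely many $i$ covers $sH_1\cap Y$ by finitely many cosets of subgroups in $\{H_k:k\neq 1\}$. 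So $sH_1$ is not big.

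For the reverse direction, suppose $L_{i_0}$ has $E_0^{(i_0)}=sH_1$, and suppose toward contradiction that $sH_1\cap Y\subseteq F_1\cup\cdots\cup F_N$ where each $F_l$ is a coset of $H_{k_l}$ with $k_l\neq 1$. Since $L_{i_0}\subseteq sH_1\cap Y$, we deduce
\[ sH_1 = L_{i_0}\cup\bigcup_{j=1}^{n_{i_0}} E_j^{(i_0)} \subseteq \bigcup_{l=1}^N F_l \cup \bigcup_{j=1}^{n_{i_0}} E_j^{(i_0)}. \]
By construction of the decomposition (\ref{eq:4}), each $E_j^{(i_0)}$ is an infinite-index subcoset of $sH_1$, and each non-empty $F_l\cap sH_1$ is a coset of $H_1\cap H_{k_l}$, of infinite index in $H_1$ by the top-level hypothesis. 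Translating by $s^{-1}$, $H_1$ is then covered by finitely many cosets of subgroups of infinite index in $H_1$, contradicting Lemma~\ref{lem:5}. I expect the main obstacle to be precisely verifying that every covering coset, once restricted to $sH_1$, has infinite index in $H_1$; the top-level assumption on $H_1$ is what makes this automatic. The uniqueness claim reduces to the same argument: if two distinct $L_{i_0},L_{i_1}$ shared $E_0=sH_1$, disjointness of the decomposition would force $sH_1\subseteq\bigcup_j E_j^{(i_0)}\cup\bigcup_j E_j^{(i_1)}$, once again a finite cover of a translate of $H_1$ by cosets of subgroups of infinite index, contradicting Lemma~\ref{lem:5}.
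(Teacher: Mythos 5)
Your proposal is correct and takes essentially the same route as the paper: both the ``exists $\Rightarrow$ big'' direction and uniqueness are obtained by intersecting a hypothetical covering with $sH_1$ and contradicting Neumann's lemma (Lemma~\ref{lem:5}), while the other direction is the same equal-or-disjoint coset analysis, phrased as a contrapositive rather than via Lemma~\ref{lem:10}. Your explicit invocation of the top-level hypothesis on $H_1$ (inherited from the setup of Proposition~\ref{prop:3}) is a welcome clarification, since the paper's proof uses it only tacitly when asserting that the cosets $F_l\cap sH_1$ have infinite index in $H_1$, and the statement would fail without it.
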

\begin{proof}
If $L_i = sH_1 \setminus \bigcup E_j$ and yet $sH_1$ is not big in $L_i$ then $sH_1 \cap L_i = L_i$ can be covered by finitely many cosets of infinite index in $H_1$.  Union these cosets with the $\{ E_j : j>0 \}$ and we have covered all of $sH_1$ by finitely many cosets of infinite index in $H_1$, a contradiction.  So $sH_1$ is big in $L_i$ and hence big in $Y$, by Lemma~\ref{lem:10}(\ref{lem:10:1}).

If $sH_1$ is big in $Y$ then by Lemma~\ref{lem:10}(\ref{lem:10:2}), $sH_1$ is big in some $L_i$.  If $L_i = E_0 \setminus \bigcup_j E_j$, then towards a contradiction, suppose that $E_0$ is not $sH_1$.  If $E_0$ is some other coset of $H_1$, then $sH_1 \cap E_0 = \emptyset$, so certainly $sH_1$ is not big in $L_i$.  So $E_0$ is a coset of some other subgroup $H_j$, and so $sH_1 \cap H_j$ is either empty (again, not possible) or is a coset of $H_1 \cap H_j$ which has infinite index in $H_1$.  Then $sH_1 \cap L_i$ is contained in a coset of infinite index in $sH_1$, and so is covered, so $sH_1$ not big in $L_i$, contradiction.

To show uniqueness, let each $L_i$ have the form of equation~\ref{eq:4}.  If $E^{(i)}_0 = E^{(j)}_0 = sH_1$, then if $i\not=j$, then as $L_i$ and $L_j$ are disjoint, we must have that $\bigcup_{k\geq 1} E^{(i)}_k \cup \bigcup_{k\geq 1} E^{(j)}_k$ is all of $sH_1$, a contradiction as these are cosets of infinite index.
\end{proof}

Let $C$ be the union of all $E^{(i)}_0$ which are cosets of $H_1$, so by the lemma, $C$ is the union of all cosets of $H_1$ which are big in $Y$.  Define
\[ \mc B = \Big\{ \bigcup_{i=1}^n s_i H_1 : \exists\, A\in\mc R(\{Y\}) \text{ so that }
sH_1 \text{ is big in } A \text{ if and only if } sH_1 = s_iH_1 \text{ for some }i \Big\}. \]
That is, $\mc B$ is the collection of sets $B$ which are finite unions of cosets of $H_1$, with the given property: there is $A\in\mc R(\{Y\})$ such that a coset $sH_1$ is big in $A$ if and only if $sH_1\subseteq B$.

\begin{lemma}\label{lem:9}
$\mc B = \mc R(\{C\})$.
\end{lemma}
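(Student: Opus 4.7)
The plan is to pass from the bigness relation to a local, Boolean-algebra formulation at each coset of $H_1$, where the specific shape of $Y$ collapses every translate of $Y$ to one of just two possibilities. For each coset $\sigma=sH_1$ of $H_1$, let $\mc I_\sigma$ denote the family of subsets of $\sigma$ that can be covered by finitely many cosets of subgroups $H_i$ with $i\neq 1$. Because $H_1$ is top level, $[H_1:H_1\cap H_i]=\infty$ for every $i\neq 1$, and so Lemma~\ref{lem:5}, applied inside $H_1$ after translating $\sigma$ to $H_1$, shows $\sigma\notin\mc I_\sigma$. Hence $\mc I_\sigma$ is a proper ideal in the Boolean algebra $2^\sigma$, and the map $\pi_\sigma:2^G\to 2^\sigma/\mc I_\sigma$, $A\mapsto [A\cap\sigma]$, is a Boolean-algebra homomorphism; by definition, $\sigma$ is big in $A$ if and only if $\pi_\sigma(A)\neq 0$.

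The key technical step is to show that $\pi_\sigma(tY)\in\{0,[\sigma]\}$ for every $t\in G$, with $\pi_\sigma(tY)=[\sigma]$ precisely when $t^{-1}\sigma\subseteq C$. Left translation by $t$ maps $\mc I_{t^{-1}\sigma}$ onto $\mc I_\sigma$, so it suffices to treat $Y$ itself. If $\sigma'$ is not big in $Y$ then $Y\cap\sigma'\in\mc I_{\sigma'}$ by definition, giving $\pi_{\sigma'}(Y)=0$. If $\sigma'$ is big, Lemma~\ref{lem:11} produces a unique $i$ with $\sigma'=E^{(i)}_0$ and $L_i=\sigma'\setminus\bigcup_{j>0}E^{(i)}_j$, which agrees with $\sigma'$ modulo $\mc I_{\sigma'}$; meanwhile for each $k\neq i$ the piece $L_k\cap\sigma'$ is either empty (when $E^{(k)}_0$ is a different coset of $H_1$) or sits inside a coset of $H_1\cap H_j$ for some $j\neq 1$, hence lies in $\mc I_{\sigma'}$. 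Collecting, $Y\cap\sigma'$ equals $\sigma'$ modulo $\mc I_{\sigma'}$, so $\pi_{\sigma'}(Y)=[\sigma']$.

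Since $\{0,[\sigma]\}$ is closed under all relative-ring operations in $2^\sigma/\mc I_\sigma$, $\pi_\sigma$ maps every element of $\mc R(\{Y\})$ into $\{0,[\sigma]\}$, and its value is given by the same relative-ring formula applied to the values at the translates. Concretely, if $A=f(t_1Y,\ldots,t_kY)$ via the recipe of Lemma~\ref{lem:3}, then $gH_1$ is big in $A$ iff $f(1_{gH_1\subseteq t_1C},\ldots,1_{gH_1\subseteq t_kC})=1$, iff $g\in f(t_1C,\ldots,t_kC)$, using that each $t_iC$ is a union of full cosets of $H_1$. Hence the union of big cosets of $H_1$ in $A$ is exactly $f(t_1C,\ldots,t_kC)\in\mc R(\{C\})$, which proves $\mc B\subseteq\mc R(\{C\})$; conversely, any $f(t_1C,\ldots,t_kC)\in\mc R(\{C\})$ is realised as a member of $\mc B$ by the element $A=f(t_1Y,\ldots,t_kY)\in\mc R(\{Y\})$.

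The main obstacle is the key step on $Y$ itself — verifying that when $\sigma'$ is big in $Y$, the piece $Y\cap\sigma'$ fills up all of $\sigma'$ modulo the ideal. This rests on the disjoint decomposition $Y=\bigsqcup L_i$ from Corollary~\ref{corr:1}, the uniqueness clause in Lemma~\ref{lem:11}, and the top-level hypothesis on $H_1$ that forces every $H_1\cap H_i$ with $i\neq 1$ to have infinite index in $H_1$. Once this is secured, the Boolean-homomorphism formalism handles both inclusions automatically.
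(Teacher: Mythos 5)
Your proof is correct, and it takes a recognisably different route from the paper's, even though both rest on the same combinatorial core. The shared core is the dichotomy coming from Lemma~\ref{lem:11} and the form (\ref{eq:4}): for a coset $\sigma$ of $H_1$, the set $Y\cap\sigma$ is either coverable by finitely many cosets of the $H_i$ with $i\neq 1$, or is all of $\sigma$ minus such a cover. The difference lies in how this dichotomy is propagated to general elements of $\mc R(\{Y\})$. The paper works with the normal form of Lemma~\ref{lem:3}, $A=\bigcap_i s_iY\setminus\bigcup_j t_jY$, and checks everything by hand: bigness in $\bigcap_i s_iY$ (by expanding $Y\cap tY$ into pieces and re-running the uniqueness argument of Lemma~\ref{lem:11}), bigness in $\bigcup_j t_jY$ (via Lemma~\ref{lem:10}), and then a three-case analysis for the difference, arriving at equation~(\ref{eq:5}). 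You instead localise at each coset, quotient by the ideal $\mc I_\sigma$ of coverable sets, prove the zero--one law once for $Y$ itself (hence, by left translation, for every $tY$), and let the homomorphism property of $\pi_\sigma$ do all further propagation: any relative-ring expression in translates of $Y$ evaluates, modulo $\mc I_\sigma$, to the same expression in the values $0$ and $[\sigma]$. This makes the case analysis automatic rather than hand-checked, and it treats an arbitrary element of $\mc R(\{Y\})$ given by an arbitrary formula; incidentally this smooths over a small imprecision in the paper, which quotes Lemma~\ref{lem:3} as saying every $A\in\mc R(\{Y\})$ is a \emph{single} set $\bigcap_i s_iY\setminus\bigcup_j t_jY$, whereas the lemma produces finite unions of such sets (your argument is indifferent to this). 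What the paper's hands-on computation buys is elementarity: it never leaves the realm of subsets of $G$. Two points you leave implicit and should record: first, when $\sigma'=E^{(i)}_0$ is a coset of $H_1$, each $E^{(i)}_j$ with $j>0$ is a coset of some $H_k$ with $k\neq 1$ (it cannot be a coset of $H_1$, being of infinite index in a coset of $H_1$), and this is exactly what places $\bigcup_{j>0}E^{(i)}_j$ in $\mc I_{\sigma'}$; second, in the converse inclusion you must check that $f(t_1C,\ldots,t_kC)$ is a \emph{finite} union of cosets of $H_1$, as the definition of $\mc B$ demands --- immediate, since finite unions of cosets of $H_1$ are closed under the relative-ring operations.
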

\begin{proof}
To get a handle on $\mc B$, we need some information about $\mc R(\{Y\})$.  By Lemma~\ref{lem:3}, every $A\in\mc R(\{Y\})$ is of the form
\[ A = \Big( \bigcap_{i=1}^n s_i Y \Big) \setminus \Big( \bigcup_{j=1}^m t_j Y \Big), \]
where $n\geq 1$ and $s_i,t_j\in G$.  (This follows as if $\mf A$ is left-invariant, so will be $\mc R(\mf A)$.)  We wish to know when $sH_1$ is big in $A$, in terms of the form $Y = \bigsqcup L_i$ as in equation (\ref{eq:4}).

Let us think about these two parts individually.  Consider
\[ Y \cap tY = \bigsqcup_{i,j} L_i \cap tL_j
= \bigsqcup_{i,j}\Big(\big(E^{(i)}_0 \cap tE^{(j)}_0\big) \setminus \big(\bigcup_k E^{(i)}_k \cup \bigcup_l tE^{(j)}_l \big) \Big). \]
As argued in the ``uniqueness'' part of Lemma~\ref{lem:11}, either $E^{(i)}_0 = tE^{(j)}_0 = sH_1$, or $E^{(i)}_0 \cap tE^{(j)}_0 \cap sH_1$ is either empty, or is a coset of infinite index in $sH_1$.  It then follows from Lemma~\ref{lem:11} that $sH_1$ is big in $Y\cap tY$ if and only if there are (unique) $i,j$ with $E^{(i)}_0 = tE^{(j)}_0 = sH_1$.  A similar argument now shows that $sH_1$ is big in $\bigcap s_i Y$ if and only if, for each $i$ there is a (necessarily unique) $j$ with $s_i E^{(j)}_0 = sH_1$.

By Lemma~\ref{lem:10}(\ref{lem:10:2}) we see that $sH_1$ is big in $\bigcup t_j Y$ if and only if there is some $j$ with $sH_1$ big in $t_jY$, if and only if, by Lemma~\ref{lem:11}, there is $j$ and a (necessarily unique) $k$ with $sH_1 = t_j E^{(k)}_0$.

Let $B_0=\bigcap s_iY$ and $B_1=\bigcup t_jY$, so $A = B_0\setminus B_1$.  Then:
\begin{itemize}
\item if $sH_1$ is not big in $B_0$, then it is not big in $A$, by Lemma~\ref{lem:10}(\ref{lem:10:1}).
\item if $sH_1$ is big in $B_0$ and not big in $B_1$, then as $B_0 = A\cup B_1$, also $sH_1$ is big in $A$, by 
Lemma~\ref{lem:10}(\ref{lem:10:2}).
\item if $sH_1$ is big in both $B_0$ and $B_1$, then from above, we know that $sH_1 \cap B_0$ is equal to $sH_1$ with a finite union of cosets of infinite index removed, while $sH_1 \cap B_1$ contains a set of the form $sH_1$ with a finite union of cosets of infinite index removed.  Thus $sH_1 \cap (B_0\setminus B_1)$ is contained in a finite union of cosets of infinite index, so $sH_1$ not big in $A = B_0\setminus B_1$.
\end{itemize}
In conclusion, $sH_1$ is big in $A$ if and only if $sH_1$ is big in $B_0$ but not big in $B_1$.  Now, $sH_1$ is big in $B_0$ if and only if $sH_1 \subseteq s_i C$ for all $i$, that is, $sH_1 \subseteq \bigcap s_i C$.  Also $sH_1$ is big in $B_1$ exactly when $sH_1 \subseteq t_jC$ for some $j$, that is, $sH_1 \subseteq \bigcup t_j C$.  Thus
\begin{equation} \bigcup \{ sH_1 : sH_1 \text{ big in A}\} = \bigcap s_i C \setminus \bigcup t_j C,
\label{eq:5}
\end{equation}
as all these sets are unions of cosets of $H_1$.  This shows that $\mc B \subseteq \mc R(\{C\})$.

To show the converse, we simply observe that by Lemma~\ref{lem:3} any member of $\mc R(\{C\})$ is of the form given by the right-hand-side of equation~\ref{eq:5}, for some $(s_i)$ and $(t_j)$, and so the associated $A$ will be a member of $\mc R(\{Y\})$, showing that $\mc R(\{C\}) \subseteq \mc B$.
\end{proof}

\begin{proposition}\label{prop:5}
There is a subgroup $H$, which is in $\mc B$ and so a finite union of cosets of $H_1$, such that any element of $\mc B$ is a finite union of cosets of $H$.
\end{proposition}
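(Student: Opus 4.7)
The plan is to take $H = \{h \in G : Ch = C\}$, the right-stabiliser of $C$. This is a subgroup, and since $Ch = C$ for $h \in H_1$ (as $C$ is a union of cosets of $H_1$), we have $H_1 \subseteq H$. The identity $CH = C$ forces $C$ to be a disjoint union of left cosets of $H$: if $c \in C$ and $h \in H$ then $ch \in CH = C$, so $cH \subseteq C$. Because $C$ is the disjoint union of some $k$ cosets of $H_1$ and each left coset of $H$ contains exactly $[H:H_1]$ cosets of $H_1$ (since $H \supseteq H_1$), we conclude $[H:H_1] \leq k$, so $H$ is a finite union of cosets of $H_1$.

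Every element of $\mc B = \mc R(\{C\})$ is then a finite union of left cosets of $H$: each generator $sC$ satisfies $(sC)H = s(CH) = sC$ and so is a union of left cosets of $H$ by the same argument; this property is preserved by finite unions, intersections, and relative complements.

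The main obstacle is showing $H \in \mc B$. Assuming $C$ non-empty, fix $s \in C^{-1}$; then $e \in sC$, and since $sC$ is a union of left cosets of $H$, $H \subseteq sC$. Consider $\mc B_s = \{B \cap sC : B \in \mc B\}$, a ring of subsets of $sC$. It contains $sC$ itself and is closed under complementation in $sC$ (because $sC \setminus B \in \mc B$ for every $B \in \mc B$), so $\mc B_s$ is a Boolean algebra. Because every element of $\mc B$ is a union of cosets of $H_1$ and $sC$ contains only $k$ such cosets, $\mc B_s$ is finite.

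To identify the atoms of $\mc B_s$, observe that $H$ equals the left stabiliser of $C^{-1}$ (since $gC^{-1} = C^{-1} \Leftrightarrow Cg^{-1} = C \Leftrightarrow g \in H$). For distinct left cosets $xH \neq yH$ contained in $sC$, $x^{-1}y \notin H$ gives $xC^{-1} \neq yC^{-1}$, and any $t$ in the symmetric difference satisfies $xH \subseteq tC$ but $yH \not\subseteq tC$ (or vice versa), using that $xH \subseteq tC \Leftrightarrow x \in tC \Leftrightarrow t \in xC^{-1}$. Hence $tC \cap sC \in \mc B_s$ separates $xH$ from $yH$, so the atoms of the finite Boolean algebra $\mc B_s$ are precisely the left cosets of $H$ contained in $sC$. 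Since atoms of a finite Boolean algebra belong to it, $H \in \mc B_s \subseteq \mc B$, completing the proof.
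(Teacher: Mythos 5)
Your proof is correct, and it takes a genuinely different route from the paper's. The paper argues by an extremal choice: it picks $H\in\mc B$ non-empty with the minimal number of $H_1$-cosets, translated so that $e\in H$; minimality, plus the closure of $\mc B=\mc R(\{C\})$ (Lemma~\ref{lem:9}) under intersections and left translations, forces $sH\cap H\in\{H,\emptyset\}$ for every $s$, which simultaneously yields that $H$ is a subgroup and that every member of $\mc B$ is a finite union of $H$-cosets. You instead define $H$ canonically as the right stabiliser $\{h\in G: Ch=C\}$, which makes the saturation half of the statement (every element of $\mc R(\{C\})$ is a union of left $H$-cosets, and a finite union since it is already a finite union of $H_1$-cosets) essentially automatic, and transfers all the work to proving $H\in\mc B$. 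Your atoms argument for that membership is sound: $\mc B_s=\{B\cap sC: B\in\mc B\}$ is a finite Boolean algebra (at most $2^k$ elements, since everything is a union of the $k$ cosets of $H_1$ inside $sC$); every element of it is a union of left $H$-cosets; and your separation step --- $xH\subseteq tC\iff x\in tC\iff t\in xC^{-1}$, with $xC^{-1}\neq yC^{-1}$ whenever $xH\neq yH$ because $H$ is also the left stabiliser of $C^{-1}$ --- shows that no atom can contain two distinct $H$-cosets, so $H=eH\subseteq sC$ is itself an atom and hence lies in $\mc B_s\subseteq\mc B$. (Like the paper's proof, yours needs $C\neq\emptyset$; you say so explicitly, and this is harmless in context.) What your approach buys is an intrinsic identification of $H$: it exhibits the subgroup of the proposition as exactly the right stabiliser of $C$, and in particular shows that this subgroup is unique; the cost is a longer membership argument, whereas the paper's minimality trick obtains both conclusions at once but specifies $H$ only by an extremal choice.
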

\begin{proof}
Every member of $\mc B$ is a finite union of cosets of $H_1$.  Let
\[ H = s_1 H_1 \sqcup \cdots \sqcup s_n H_1 \in \mc B \]
be chosen with $n>0$ minimal (so $H$ is the disjoint union of $n$ cosets of $H_1$, and any member of $\mc B$ is the disjoint union of at least $n$ cosets of $H_1$).  By translating, we may suppose that $s_1=e$ the identity.  We will show that $H$ is a subgroup.

If $A = \bigsqcup t_i H_1 \in \mc B$ then $A\cap H\in\mc B$, and so is either empty, or the union of at least $n$ cosets of $H_1$.  As $A \cap H \subseteq H$ we must have $H=A\cap H$ or $A\cap H=\emptyset$.  In particular, for each $s$, either $sH \cap H = H$ or $sH \cap H=\emptyset$.  If $s\in H$ then as $e\in H$ also $s \in sH$ so $s\in H \cap sH$ so $H\cap sH = H$.  Thus $H H \subseteq H$.  Also $e\in H \cap s^{-1}H$ and so $H \cap s^{-1}H = H$ so in particular $s^{-1} = s^{-1}e \in s^{-1}H \subseteq H$, and we conclude that $H$ is a subgroup.

Given $A\in\mc B$ and $s\in G$ notice that $sH \cap A = s(H\cap s^{-1}A)$ is either empty or equal to $sH$, because $s^{-1}A\in\mc B$.  It follows that $A$ is a (necessarily finite) union of cosets of $H$.
\end{proof}

\begin{lemma}\label{lem:12}
Let $H$ be a subgroup containing $H_1$ with $[H:H_1] < \infty$.  For each $i>1$
we have that $[H:H\cap H_i] = \infty$ or $1$, and that $H\cap H_i$ is a finite union of cosets of some $H_k$.
\end{lemma}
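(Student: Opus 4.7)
The plan is to reduce both claims to a single key estimate: $[H\cap H_i : H_1\cap H_i]$ is finite and in fact bounded by $[H:H_1]$. To establish this, I would fix left coset representatives $t_1,\dots,t_d$ for $H_1$ in $H$, with $d=[H:H_1]<\infty$, so that $H\cap H_i = \bigsqcup_{k=1}^d (t_kH_1\cap H_i)$. A short direct calculation shows that each nonempty intersection $t_kH_1\cap H_i$ is a single left coset of $H_1\cap H_i$, namely $t_kh_0(H_1\cap H_i)$ for any $h_0\in H_1$ with $t_kh_0\in H_i$. Hence $H\cap H_i$ is a disjoint union of at most $d$ cosets of $H_1\cap H_i$.

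Given the key estimate, the ``finite union of cosets of some $H_k$'' claim is immediate. The standing family hypothesis says the intersection $H_1\cap H_i$ is a finite union of cosets of some $H_k$; since $H_1\cap H_i$ is itself a subgroup, this forces $H_k\subseteq H_1\cap H_i$ with $[H_1\cap H_i:H_k]<\infty$. Then $H_k\subseteq H\cap H_i$, and multiplicativity of indices together with the key estimate yield
\[
[H\cap H_i:H_k] = [H\cap H_i:H_1\cap H_i]\cdot[H_1\cap H_i:H_k] \leq [H:H_1]\cdot[H_1\cap H_i:H_k] < \infty,
\]
so $H\cap H_i$ is indeed a finite union of cosets of $H_k$.

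For the index claim, multiplicativity gives $[H:H\cap H_i]\cdot[H\cap H_i:H_1\cap H_i] = [H:H_1]\cdot[H_1:H_1\cap H_i]$, and the standing family hypothesis provides $[H_1:H_1\cap H_i]\in\{1,\infty\}$. In the case $[H_1:H_1\cap H_i]=\infty$, the right-hand side is infinite, while $[H\cap H_i:H_1\cap H_i]$ is finite by the key estimate, so $[H:H\cap H_i]=\infty$, as required. The main obstacle is the complementary case $H_1\subseteq H_i$: then $[H:H\cap H_i]\leq[H:H_1]<\infty$ is automatic, but forcing the index to equal $1$ requires $H\subseteq H_i$, which is not a consequence of the bare hypothesis $[H:H_1]<\infty$ alone. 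In the proof of Theorem~\ref{thm:one} the lemma is applied with $H$ produced by Proposition~\ref{prop:3} from a subgroup $H_1$ that is \emph{top level} (not contained in any other $H_j$); combined with the standing hypothesis $[H_1:H_1\cap H_j]\in\{1,\infty\}$, top-levelness forces $[H_1:H_1\cap H_i]=\infty$ for every $i>1$. Thus the problematic case simply does not arise in the intended application, and the ``or $1$'' horn of the disjunction is vacuous; all genuine work is carried by the single case $[H_1:H_1\cap H_i]=\infty$ handled above.
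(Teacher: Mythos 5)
Your two substantive arguments are correct and essentially coincide with the paper's own proof: the paper likewise writes $H=\bigcup_j s_jH_1$ over coset representatives and decomposes $H\cap H_i=\bigcup_j (s_jH_1\cap H_i)$, with each non-empty piece a single coset of $H_1\cap H_i$ --- which is exactly your key estimate --- and deduces the coset-union claim this way. For the index claim the paper argues dually (if $[H:H\cap H_i]=n<\infty$, cover $H$ by $n$ cosets of $H\cap H_i$ and intersect with $H_1$ to get $[H_1:H_1\cap H_i]\leq n$), rather than via your multiplicativity identity; that difference is cosmetic.

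The genuine divergence is in the case $[H_1:H_1\cap H_i]=1$, and there you are right where the paper is not. The paper disposes of that case with the single assertion ``if $[H_1:H_1\cap H_i]=1$ then $H_i\subseteq H_1\subseteq H$'', but the inclusion is backwards ($[H_1:H_1\cap H_i]=1$ means $H_1\subseteq H_i$), and even after correcting it the desired conclusion $[H:H\cap H_i]=1$ does not follow. Indeed, the lemma as literally stated is false: in $G=\mathbb{Z}^2\times(\mathbb{Z}/2\mathbb{Z})$ take $H_1=\mathbb{Z}\times\{0\}\times\{0\}$ and $H_2=\mathbb{Z}\times\mathbb{Z}\times\{0\}$, which satisfy the standing hypotheses ($[H_1:H_1\cap H_2]=1$, $[H_2:H_2\cap H_1]=\infty$, and $H_1\cap H_2=H_1$ is a coset of $H_1$); then $H=\mathbb{Z}\times\{0\}\times(\mathbb{Z}/2\mathbb{Z})$ contains $H_1$ with index $2$, yet $[H:H\cap H_2]=[H:H_1]=2\notin\{1,\infty\}$. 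Your diagnosis --- that $[H:H_1]<\infty$ alone cannot force the index to be $1$, and that the case is vacuous in the only place the lemma is invoked, namely in the proof of Theorem~\ref{thm:one} where $H$ is produced by Proposition~\ref{prop:3} from a top level $H_1$, so that $[H_1:H_1\cap H_i]=\infty$ for all $i>1$ --- is exactly the right repair, and under that reading both your proof and the paper's subsequent argument go through intact.
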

\begin{proof}
For $i>1$, we now that either $[H_1:H_1\cap H_i]=1$ or $\infty$.
If $[H_1:H_1\cap H_i]=1$ then $H_i\subseteq H_1 \subseteq H$.  Otherwise, $[H_1:H_1\cap H_i]=\infty$, and we claim that also $[H:H\cap H_i] = \infty$.  If not, then $H = \bigcup_{i=1}^n s_i(H\cap H_i)$ say, and as $H_1\leq H$, it follows that $H_1 = H_1 \cap H = \bigcup H_1 \cap s_i(H \cap H_i)$.  For each $i$, either $H_1 \cap s_i(H\cap H_i)$ is empty, or is a coset of $H_1 \cap (H\cap H_i) = H_1 \cap H_i$, and so $[H_1:H_1\cap H_i]\leq n$, a contradiction.

Let $H_1 \cap H_i$ be a finite union of cosets of some $H_k$, this being one of our properties of the family $\{H_i\}$.  As $H_1$ is finite index in $H$, we have that $H = \bigcup_{j=1}^n s_j H_1$ say.  Then $H \cap H_i = \bigcup_j s_jH_1 \cap H_i$, and for each $j$ either $s_j H_1 \cap H_i$ is empty, or is a coset of $H_1 \cap H_i$, which is a finite union of cosets of $H_k$.  Thus $H\cap H_i$ is also a finite union of cosets of $H_k$.
\end{proof}

We can now complete the proof of Proposition~\ref{prop:3}.

\begin{proof}[Proof of Proposition~\ref{prop:3}]
Our hypothesis is that $H_i$ is not contained in any other $H_j$.  We have been
supposing, by reordering, that $i=1$.  Let $H$ be given by Proposition~\ref{prop:5}.  We need to show that $H\in\mc R_2(\{Y\})$ and that $Y\in\mc R(\{H\} \cup \{ H_j : j\not=1\} )$.

As $H \in \mc B$ there is some $A_0\in\mc R(\{Y\}) \subseteq \mc R(\{H_j\})$ so that $sH_1$ is big in $A_0$ if and only if $sH_1\subseteq H$.  Then $A_0$ has the form
\begin{equation} A_0 = \bigsqcup_i\Big( F^{(i)}_0 \setminus \bigcup_j F^{(i)}_j\Big), \label{eq:3} \end{equation}
where each $F^{(i)}_j$ is some coset of some $H_k$, and each $F^{(i)}_j$, $j>0$, is a subcoset of infinite index in $F^{(i)}_0$.  Then $sH_1$ is big in $A_0$ if and only if $sH_1 = F^{(i)}_0$ for some $i$.  It follows that $H\setminus A_0$ is contained in the union of: (1) $F^{(i)}_j$ where $F^{(i)}_0$ is a coset of $H_1$; and (2) $F^{(i)}_0$ a coset of some $H_j$ for $j>1$.  So $H\setminus A_0$ is contained in a finite union of cosets of $\{ H\cap H_j : j>1 \}$.  By Lemma~\ref{lem:12}, $H\cap H_j$ is of infinite index in $H$ for $j>1$, and so we can apply Lemma~\ref{lem:7} to $H$ to find $(t_i)$ in $H$ with
\[ \emptyset = \bigcap_i t_i(H\setminus A_0) = H \setminus \bigcup_i t_i A_0. \]
So if we set $B_0 = \bigcup_i t_i A_0$ then $H\subseteq B_0$, and $B_0\in\mc R(\{Y\})$.

We claim that $sH_1$ is big in $B_0$ if and only if $sH_1\subseteq H$, which is equivalent to $s\in H$.  As $H\subseteq B_0$, ``if'' is clear.  To show ``only if'', suppose $sH_1$ is big in $B_0$, so by Lemma~\ref{lem:10}(\ref{lem:10:2}) $sH_1$ is big in $t_iA_0$ for some $i$, so $t_i^{-1}sH_1$ is big in $A_0$, so $t_i^{-1}sH_1\subseteq H$, so $sH_1\subseteq t_iH = H$.

$B_0$ has the same form of equation~(\ref{eq:3}), so again $sH_1\subseteq H$ if and only if some $F^{(i)}_0$ is equal to $sH_1$.  Thus, again, we see that $B_0$ is contained in the union of $H$ and cosets of $\{ H_j : j>1 \}$, say $B_0 \subseteq H \cup \bigcup_{i=1}^N A_i$, so each $A_i$ is a coset of some $H_j, j>1$.  With $C = \bigcup_i A_i$, by Lemma~\ref{lem:8} there are $t_1,\cdots,t_m\in H$ with $\bigcap_i Ct_i = \emptyset$.  As $B_0 \subseteq H \cup C$,
\[ \bigcap_i B_0 t_i \subseteq \bigcap_i \big( H \cup C \big)t_i
= \bigcap_i \big( H \cup Ct_i \big)
= H \cup \bigcap_i Ct_i = H. \]
Thus clearly $H = \bigcap_i B_0t_i$ and so $H\in\mc R_2(\{Y\})$.

To finish the proof of Proposition~\ref{prop:3}, it remains to show that $Y \in \mc R(\{H\} \cup \{ H_j :  j\not=1 \})$.  By Lemma~\ref{lem:9} we have that $\bigcup\{ E^{(i)}_0 : E^{(i)}_0 \text{ a coset of }H_1\}$ is a union of cosets of $H$.  Reorder so that $E^{(1)}_0 \cup\cdots\cup E^{(k)}_0 = sH$, say, so that
\[ \bigcup_{i=1}^k E^{(i)}_0 \setminus \bigcup_j E^{(i)}_j
= sH \setminus \bigcup_{i=1}^k \bigcup_j E^{(i)}_j. \]
Notice that each $E^{(i)}_j$ is a coset of some $H_t$ for some $t>1$.  As $H_1$ is not contained in any other $H_k$, no $E^{(i)}_j$, with $j>0$, is a coset of $H_1$.  Thus in this way we can replace every usage of a coset of $H_1$ by a coset of $H$, so proving the claim, and completing the proof.
\end{proof}

\begin{remark}\label{rem:1}
It was only in the final step of the previous proof that we started to work with right translations, as well as left translations.  This seems necessary, as the following example shows.  Consider $\mathbb F_2$ the free group with generators $a,b$, and set $H_1 = \langle a \rangle = \{ a^n : n\in\mathbb Z \}$ and $H_2 = b H_1 b^{-1}$.  Then $H_1, H_2$ are subgroups and $H_1 \cap H_2 = \{e\}$, so the family $\{H_1,H_2\}$ satisfies our assumptions.

Now let $Y = H_1 \sqcup b^{-1} H_2$ which is the disjoint union of cosets of the $H_i$.  Then $Y = \langle a \rangle \sqcup \langle a \rangle b^{-1}$.  For $x\in\mathbb F_2$, let $x = ya^n$ where $y$ is a reduced word in $a,b$ which does not end in $a,a^{-1}$, and $n\in\mathbb Z$, so that $x\langle a\rangle = y\langle a\rangle$.  Thus $xY = y\langle a\rangle \sqcup y\langle a\rangle b^{-1}$ and so $xY$ is either equal to $Y$ or disjoint from $Y$.

We conclude that $\mc R(\{Y\})$ just consists of finite disjoint unions of left translates of $Y$.  In particular, neither $H_1$ nor $H_2$ is in $\mc R(\{Y\})$.
\end{remark}

\section{Application to completely bounded maps}

We now use Theorem~\ref{thm:one} to give a new proof of the main result of \cite{is}.  We are now following the end of Section~4 of \cite{cohen} fairly closely, but again with more details provided, and changes made from the abelian setting.

\begin{lemma}\label{lem:1}
Let $G,H$ be topological spaces, $Y\subseteq H$ a subset, and $\alpha:Y\rightarrow G$ a map which is continuous when $Y$ has the subspace topology.  Suppose there is a continuous map $\overline\alpha:\overline Y\rightarrow G$ extending $\alpha$.  Let $\Gamma = \mc G(\alpha)\subseteq H\times G$, the graph of $\alpha$.  If $\Gamma=U \cap C$ for some open $U$ and closed $C$ in $H\times G$, then $Y = \overline Y \cap V$ for some open $V\subseteq H$.  In particular, $Y$ is Borel.
\end{lemma}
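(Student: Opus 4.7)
The plan is to use the graph of the continuous extension $\overline\alpha$ to transfer the open/closed decomposition of $\Gamma$ into $H\times G$ to a decomposition of $Y$ inside $\overline Y$. Let $\overline\Gamma = \mc G(\overline\alpha)\subseteq \overline Y\times G \subseteq H\times G$, and define the map $\iota:\overline Y\rightarrow H\times G$ by $\iota(s)=(s,\overline\alpha(s))$. Since $\overline\alpha$ is continuous, $\iota$ is continuous; moreover $\iota$ is a homeomorphism onto $\overline\Gamma$ (with inverse given by the first-coordinate projection, which is continuous), and $\iota(Y)=\Gamma$.

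The first key step is to argue that $\overline\Gamma\subseteq C$. Because $Y$ is dense in $\overline Y$ and $\iota$ is continuous, $\iota(Y)=\Gamma$ is dense in $\iota(\overline Y)=\overline\Gamma$, so in particular every point of $\overline\Gamma$ lies in the closure of $\Gamma$ in $H\times G$. Since $\Gamma\subseteq U\cap C\subseteq C$ and $C$ is closed, this closure is contained in $C$, giving $\overline\Gamma\subseteq C$.

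The second step is to take $V' = \iota^{-1}(U)$. This is open in $\overline Y$ because $\iota$ is continuous and $U$ is open in $H\times G$, hence $V' = \overline Y\cap V$ for some open $V\subseteq H$. It remains to check $V'=Y$. If $s\in Y$ then $\iota(s)=(s,\alpha(s))\in\Gamma\subseteq U$, so $s\in V'$. Conversely, if $s\in V'$ then $\iota(s)\in U$ and $\iota(s)\in\overline\Gamma\subseteq C$ by the previous step, hence $\iota(s)\in U\cap C=\Gamma$, which forces $s\in Y$. This yields $Y=\overline Y\cap V$, and since $\overline Y$ is closed and $V$ is open, $Y$ is Borel.

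The only subtle point is the inclusion $\overline\Gamma\subseteq C$; everything else is formal manipulation of the fact that $\iota$ is a homeomorphism onto $\overline\Gamma$. I do not anticipate needing any separation axiom on $H$ beyond what is implicit in the hypothesis (the closedness of $C$ in $H\times G$ does the work that Hausdorffness would normally do for graphs).
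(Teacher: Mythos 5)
Your proposal is correct and takes essentially the same route as the paper: both arguments rest on continuity of the graph map $\iota:s\mapsto(s,\overline\alpha(s))$ on $\overline Y$, use closedness of $C$ to get $\mc G(\overline\alpha)\subseteq C$, and then pull back $U$ through $\iota$ to produce the open set $V$ with $Y=\overline Y\cap V$ via $U\cap C=\Gamma$. The only differences are cosmetic: the paper proves the full equality $\mc G(\overline\alpha)=\overline\Gamma$ (closure of $\Gamma$) by a two-way net argument and verifies openness of the pullback by an explicit neighbourhood argument, whereas you need only the single inclusion and cite continuity of $\iota$, which is a slightly leaner packaging of the same ideas.
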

\begin{proof}
As $\Gamma\subseteq C$ also $\overline\Gamma\subseteq C$, and as $\Gamma\subseteq U$ also $\Gamma \subseteq \overline\Gamma \cap U \subseteq C\cap U = \Gamma$, so we conclude that $\Gamma = \overline\Gamma \cap U$.

We claim that $\mc G(\overline\alpha) = \overline\Gamma$, which follows by
continuity of $\overline\alpha$.  Indeed, given $y\in\overline Y$ let $(y_i)$ be a net in $Y$ converging to $y$, so that $\alpha(y_i) = \overline\alpha(y_i) \rightarrow \overline\alpha(y)$ by continuity, and hence $(y_i, \alpha(y_i)) \rightarrow (y,\overline\alpha(y))$.  Hence $\mc G(\overline\alpha) \subseteq \overline\Gamma$.  Given $(y,x)\in\overline\Gamma$ let $(y_i, \alpha(y_i))$ be a net in $\Gamma$ converging to $(y,x)$, so $y_i\rightarrow y$, $\alpha(y_i)\rightarrow x$, so continuity of $\overline\alpha$ ensures that $\overline\alpha(y) = x$.  Thus $\overline\Gamma \subseteq \mc G(\overline\alpha)$.

Let $V_0 = \{ y\in\overline Y : (y,\overline\alpha(y))\in U \}$.  Given $y\in V_0$, as $U\subseteq H\times G$ is open, by the definition of the product topology, there is $W_1\subseteq H$ and $W_2\subseteq G$ both open, with $y\in W_1, \overline\alpha(y)\in W_2$ and $W_1\times W_2 \subseteq U$.  Then $\overline\alpha^{-1}(W_2)$ is open in $\overline Y$, so there is $W_0\subseteq H$ open with $W_0 \cap \overline Y = \overline\alpha^{-1}(W_2)$.  Given $x\in W_0 \cap W_1 \cap \overline Y$, we have that $x\in\overline\alpha^{-1}(W_2)$, so $\overline\alpha(x)\in W_2$, and $x\in W_1$, so $(x,\overline\alpha(x))\in U$, so $x\in V_0$.  Also $y\in W_0 \cap W_1 \cap \overline Y$.  Thus $V_0$ is open in $\overline Y$, as we have shown that each point $y$ has an open neighbourhood, namely $W_0 \cap W_1 \cap \overline Y$.

Let $V\subseteq H$ be open with $V\cap \overline Y = V_0$.  Given $y\in V_0$, so $(y,\overline\alpha(y))\in U$, also $(y,\overline\alpha(y))\in\mc G(\overline\alpha) = \overline\Gamma$, and so $(y,\overline\alpha(y)) \in U \cap \overline\Gamma = \Gamma$, and so $y\in Y$.  Conversely, if $y\in Y$ then $(y,\overline\alpha(y)) = (y,\alpha(y)) \in \Gamma = \overline\Gamma \cap U$, and so $y\in V_0$.  Thus $\overline Y \cap V = Y$ as required.
\end{proof}

Proposition~\ref{prop:1} below is implicitly assumed in the proof of \cite[Lemma~1.3(ii)]{is}, but we do not see why it follows immediately ``by uniformity of the topology''; compare the argument on \cite[page~223]{cohen}, which we follow.

\begin{lemma}\label{lem:4}
Let $L = E_0 \setminus \bigcup_{k=1}^n E_k$ be of the form of equation~\ref{eq:2}, so $E_0$ is a coset and each $E_k$ is a subcoset of infinite index.  For any $N$ there are $a_1,\cdots,a_N \in L$ with, for $i\not= j$, $a_i^{-1}a_j\not\in E_k^{-1}E_k$ for any $k\geq 1$.
\end{lemma}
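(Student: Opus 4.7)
The plan is to build $a_1,\ldots,a_N$ one at a time by induction on $N$, using only Lemma~\ref{lem:5} (Neumann's theorem) as input. The first step is cosmetic: write $E_0 = sH$, and for each $k\geq 1$ write $E_k = s_k K_k$ where $K_k$ is a subgroup; since $E_k\subseteq E_0$ we have $K_k\leq H$, and the hypothesis that $E_k$ has infinite index in $E_0$ says exactly $[H:K_k]=\infty$. A direct calculation gives
\[ E_k^{-1}E_k \;=\; K_k s_k^{-1} s_k K_k \;=\; K_k, \]
so the forbidden condition $a_i^{-1}a_j\in E_k^{-1}E_k$ reduces to $a_j\in a_iK_k$, i.e.\ $a_i$ and $a_j$ lying in a common left coset of $K_k$. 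Hence the goal becomes: find elements of $L$ that lie in pairwise distinct cosets of every $K_k$, $k\geq 1$.

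For the base case $N=1$ it suffices to know $L\neq\emptyset$: otherwise $sH = \bigcup_k s_kK_k$, and left-translating by $s^{-1}$ would exhibit $H$ as a finite union of cosets of subgroups of infinite index in $H$, contradicting Lemma~\ref{lem:5} applied inside $H$. For the inductive step, suppose $a_1,\ldots,a_p\in L$ have been chosen so that any two lie in distinct cosets of every $K_k$, and suppose towards a contradiction that no admissible $a_{p+1}$ exists. Then every point of $L$ sits in some $a_iK_k$ with $i\leq p$, $k\geq 1$, so
\[ E_0 \;=\; L \,\cup\, \bigcup_{k=1}^{n}E_k \;\subseteq\; \bigcup_{i\leq p,\,k\geq 1} a_iK_k \;\cup\; \bigcup_{k=1}^{n} s_kK_k. \]
Translating by $s^{-1}$ once more, $H$ becomes a finite union of cosets of the subgroups $K_k$, each of infinite index in $H$. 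Lemma~\ref{lem:5} forbids this.

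There is essentially no hard step here beyond spotting the identification $E_k^{-1}E_k = K_k$, after which the entire argument collapses to Neumann's theorem. The only care point is that Lemma~\ref{lem:5} must be applied inside $H$ rather than $G$, which is legitimate because the $K_k$ are subgroups of $H$ and, by hypothesis, are of infinite index there.
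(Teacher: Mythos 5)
Your proof is correct and takes essentially the same route as the paper: induction on $N$, where failure to extend the list forces $L$ (hence $E_0$) to be covered by finitely many cosets of the infinite-index subgroups $E_k^{-1}E_k$, contradicting Lemma~\ref{lem:5}. The only cosmetic differences are that you start the induction at $N=1$ rather than $N=2$ and make explicit the translation by $s^{-1}$ so as to apply Lemma~\ref{lem:5} inside $H$, which the paper leaves implicit.
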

\begin{proof}
We first show that this is true for $N=2$.  If not, then for all $a,b\in L$ there is some $k$
with $a^{-1}b\in E_k^{-1}E_k$.  That is, for each $a\in L$, we have $L \subseteq \bigcup_{k=1}^n a E_k^{-1} E_k$.
As $E_k^{-1} E_k$ is the subgroup which $E_k$ is a coset of, this shows that $L$ is contained
in the finite union of cosets of infinite index, and so $E_0$ is contained in some finite union
of cosets of infinite index, contradiction.

We now proceed by induction.  Suppose the claim holds for $N\geq 2$, but does not hold for $N+1$.
Then given any $a_1,\cdots,a_N$ satisfying the claim, we cannot find $a_{N+1}$ satisfying the
claim, so that for any $b\in L$ we have that $a_i^{-1}b \in E_k^{-1}E_k$ for some $i,k$.
That is,
\[ L \subseteq \bigcup_{i=1}^N \bigcup_{k=1}^n a_i E_k^{-1}E_k. \]
Again, it follows from this that $E_0$ is contained in some finite union
of cosets of infinite index, which is a contradiction.
\end{proof}

\begin{proposition}\label{prop:1}
Let $G,H$ be locally compact groups, and let $L = E_0 \setminus \bigcup_{k=1}^n E_k \subseteq H$ be of the form as in equation~\ref{eq:2}.  Let $\alpha:L\rightarrow G$ be a map which is continuous on $L$ for the subspace topology, and is the restriction of some affine map $\psi:E_0\rightarrow G$.  Then $\psi$ is continuous.
\end{proposition}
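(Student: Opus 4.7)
The plan is to show $\psi$ is continuous at each $s\in E_0$ by producing, from any net $(s_\lambda)\to s$, nearby points that lie in $L$, and then invoking the affine identity together with continuity of $\alpha$ on $L$. The combinatorial key is Lemma~\ref{lem:4}.

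First I would fix $s\in E_0$ and a net $(s_\lambda)$ in $E_0$ with $s_\lambda\to s$. Apply Lemma~\ref{lem:4} with $N=n+1$ to pick $a_1,\ldots,a_{n+1}\in L$ with $a_i^{-1}a_j\notin E_k^{-1}E_k$ for all $i\neq j$ and all $k\geq 1$. Since $E_0$ is a coset and $s_\lambda,s,a_i\in E_0$, the product $u_{i,\lambda}:=s_\lambda s^{-1}a_i$ again lies in $E_0$, and $u_{i,\lambda}\to a_i$ by continuity of the group operations.

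The core claim is that, for every $\lambda$, at least one of $u_{1,\lambda},\ldots,u_{n+1,\lambda}$ actually lies in $L$. Since $u_{i,\lambda}\notin L$ means $s_\lambda s^{-1}\in E_k a_i^{-1}$ for some $k$, this is equivalent to saying the ``bad set'' $B:=\bigcap_{i=1}^{n+1}\bigcup_{k=1}^n E_k a_i^{-1}$ is empty. To see this, I would observe that for $i\neq j$ the sets $E_k a_i^{-1}$ and $E_k a_j^{-1}$ are disjoint: if $xa_i^{-1}=ya_j^{-1}$ for some $x,y\in E_k$, then $y^{-1}x=a_j a_i^{-1}\cdot(\ldots)$ forces $a_i^{-1}a_j\in E_k^{-1}E_k$, contradicting our choice of the $a_i$. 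Expanding $B$ by distributivity as the union over maps $k:\{1,\ldots,n+1\}\to\{1,\ldots,n\}$ of the sets $\bigcap_i E_{k(i)}a_i^{-1}$, a non-empty contribution would force $k$ to be injective, which pigeonhole rules out. Hence $B=\emptyset$, and for each $\lambda$ we can select $i(\lambda)\in\{1,\ldots,n+1\}$ with $u_{i(\lambda),\lambda}\in L$.

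The affine identity applied to the triple $(s_\lambda,s,a_{i(\lambda)})$ (all in $E_0$) then gives
\[
\psi(s_\lambda)=\psi(u_{i(\lambda),\lambda})\,\psi(a_{i(\lambda)})^{-1}\,\psi(s).
\]
Passing to a subnet on which $i(\lambda)$ is the constant $i_0$ (possible because $i(\lambda)$ takes values in the finite set $\{1,\ldots,n+1\}$), the points $u_{i_0,\lambda}\in L$ converge to $a_{i_0}\in L$, so continuity of $\alpha=\psi|_L$ yields $\psi(u_{i_0,\lambda})\to\psi(a_{i_0})$; hence $\psi(s_\lambda)\to\psi(s)$ along this subnet. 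Since the same extraction applies to every subnet of $(s_\lambda)$, the standard subnet characterization of convergence gives $\psi(s_\lambda)\to\psi(s)$ for the whole net, proving continuity of $\psi$ at $s$.

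The step I expect to require the most care is the disjointness computation $E_k a_i^{-1}\cap E_k a_j^{-1}=\emptyset$: one must keep track of the fact that these are left cosets of (generally distinct) conjugate subgroups, and verify that the condition $a_i^{-1}a_j\notin E_k^{-1}E_k$ provided by Lemma~\ref{lem:4} is exactly what forces them to be disjoint rather than equal. Once this is settled, the remainder is a routine application of the affine formula and a subnet argument.
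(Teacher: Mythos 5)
Your proof is correct and takes essentially the same approach as the paper: the same application of Lemma~\ref{lem:4} with $N=n+1$, the same pigeonhole claim that for points of $E_0$ some translate $s_\lambda s^{-1}a_i$ must land in $L$, and the same affine-identity manipulation $\psi(s_\lambda)=\psi(s_\lambda s^{-1}a_i)\psi(a_i)^{-1}\psi(s)$; the only divergence is at the end, where the paper builds explicit neighbourhoods ($V$, $V_k$, $V_0$) to get uniform continuity while you extract subnets to get pointwise continuity, and both are valid. The disjointness step you flag does hold: if $xa_i^{-1}=ya_j^{-1}$ with $x,y\in E_k$ then $a_j^{-1}a_i=y^{-1}x\in E_k^{-1}E_k$, and since $E_k^{-1}E_k$ is the subgroup of which $E_k$ is a left coset, also $a_i^{-1}a_j\in E_k^{-1}E_k$, contradicting the choice of the $a_i$.
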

\begin{proof}
Choose $a_1,\cdots,a_{n+1}\in L$ using the lemma.  We claim that for any $x,y\in E_0$ there is
$k$ with $x y^{-1} a_k \in L$.  Indeed, if not, then as $x y^{-1} a_k\in E_0$ we must have that
$x y^{-1} a_k\in \bigcup_{i=1}^n E_i$, for each $k$.  By the pigeonhole principle, there is $i$
and $1\leq r<s\leq n+1$ with $x y^{-1} a_r, x y^{-1} a_s \in E_i$.  Thus $(x y^{-1} a_r)^{-1}
xy^{-1}a_s = a_r^{-1} a_s \in E_i^{-1} E_i$, contradiction.

Let $U$ be an open neighbourhood of $e$ in $G$, and choose an open symmetric neighbourhood $V$
of $e$ with $VV\subseteq U$.  Then $V \alpha(a_k)$ is an open neighbourhood of $\alpha(a_k)$
and so by continuity of $\alpha$, and the definition of the subspace topology,
there is an open $V_k$ in $H$ with
\[ V_k \cap L = \alpha^{-1}( V \alpha(a_k) ). \]
Then $a_k \in V_k\cap L$, and given $x,y\in V_k\cap L$ we see that $\alpha(x) = v_0\alpha(a_k)$
and $\alpha(y) = v_1\alpha(a_k)$, for some $v_0,v_1\in V$.  Then
\[ \alpha(x) \alpha(y)^{-1} = v_0\alpha(a_k) \alpha(a_k)^{-1} v_1^{-1}
= v_0 v_1^{-1} \in VV \subseteq U. \]

Now set $V_0 = \bigcap_{k=1}^m V_k a_k^{-1}$ an open neighbourhood of $e$ in $H$.
Let $x,y\in E_0$ with $x y^{-1} \in V_0$.  Then
there is $k$ with $x y^{-1} a_k \in L$, as above.  Also $x y^{-1} a_k \in V_0 a_k \subseteq V_k$
and $a_k \in V_0 a_k \subseteq V_k$.  As also $a_k\in L$, we conclude that
$\alpha(x y^{-1} a_k) \alpha(a_k)^{-1} \in U$.
Choose any $z\in E_0$.  Then $a_k = z z^{-1} a_k$.
As $\psi$ is affine, and agrees with $\alpha$ on $L$, we see that
\begin{align*} \psi(x) \psi(y)^{-1} &=
\psi(x) \psi(y)^{-1} \psi(a_k) \psi(a_k)^{-1} \psi(z) \psi(z)^{-1} \\
&= \psi(x) \psi(y)^{-1} \psi(a_k) \big( \psi(z) \psi(z)^{-1} \psi(a_k) \big)^{-1} \\
&= \psi(x y^{-1} a_k) \psi(z z^{-1} a_k)^{-1} =
 \alpha(x y^{-1} a_k) \alpha(a_k)^{-1} \in U.
\end{align*}

It follows that $\psi$ is (uniformly) continuous.  Indeed, if $x\in E_0$ and $(x_i)$ is a net in
$E_0$ converging to $x$, then $x_i^{-1} x \rightarrow e$ in $H$.  Given any neighbourhood $U$
of $e$ in $G$, pick $V_0$ as above, and observe that eventually $x_i^{-1}x\in V_0$.  Thus
$\psi(x_i^{-1}x) = \psi(x_i)^{-1}\psi(x) \in U$.  This shows that $\psi(x_i)^{-1}\psi(x)
\rightarrow e$ in $G$ so that $\psi(x_i)\rightarrow \psi(x)$.
\end{proof}

We are now in a position to complete our argument.  We recall the setup from the start of Section~\ref{sec:2}, so $\Phi:A(G)\rightarrow B(H)$ is a completely bounded homomorphism, and with $G$ amenable, and there is $\alpha:H\rightarrow G_\infty$ continuous, which implements $\Phi$.  With $Y=\alpha^{-1}(G)$ we additionally know that $\alpha:Y\subseteq H\rightarrow G$ is piecewise affine.

\begin{proposition}\label{prop:2}
With $\alpha:Y\subseteq H\rightarrow G$ as above, we have that $Y$ is clopen.  Furthermore, $Y$ may be written as the disjoint union of sets $L_i$ of the form in equation~\ref{eq:2}, say
\[ L_i = E^{(i)}_0 \setminus \bigcup_j E^{(i)}_j, \]
with each $E^{(i)}_j$ Borel, and for each $i$ there is a continuous affine map $\overline\alpha_i:
\overline{E^{(i)}_0} \rightarrow G$ which restricts to $\alpha$ on $L_i$.
\end{proposition}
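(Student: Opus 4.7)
The plan is to pass to the graph $\mc G(\alpha) \subseteq H \times G$ and invoke Theorem~\ref{thm:one} there, so that the decomposing subgroups are forced to be Borel. First, observe that $\mc G(\alpha)$ is closed in $H \times G$: if a net $(s_\lambda, \alpha(s_\lambda))$ converges to $(s, g) \in H \times G$, then $\alpha(s_\lambda) \to g$ in $G \subseteq G_\infty$, while continuity of $\alpha : H \to G_\infty$ gives $\alpha(s_\lambda) \to \alpha(s)$ in $G_\infty$, so $\alpha(s) = g \in G$ and $s \in Y$. Since $\alpha$ is piecewise affine, Lemma~\ref{lem:2} places $\mc G(\alpha)$ in $\Omega(H \times G)$, so Theorem~\ref{thm:one} produces subgroups $K_1, \ldots, K_n \leq H \times G$ lying in $\mc R_2(\{\mc G(\alpha)\})$ with $\mc G(\alpha) \in \mc R(\{K_l\})$. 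Because $\mc G(\alpha)$ is Borel and the Borel $\sigma$-algebra is closed under two-sided translation, finite unions, intersections and relative complements, each $K_l$ is Borel in $H\times G$.

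Corollary~\ref{corr:1} then yields $\mc G(\alpha) = \bigsqcup_i M_i$ with $M_i = F_0^{(i)} \setminus \bigcup_j F_j^{(i)}$, where $F_0^{(i)}$ is a Borel coset of some $K_l$ and each $F_j^{(i)}$ is a sub-coset of infinite index. I then refine using the piecewise affine data $Y = \bigsqcup_k Y_k$, $\alpha|_{Y_k} = \psi_k|_{Y_k}$ for affine $\psi_k : \aff(Y_k) \to G$. The graph $\mc G(\psi_k)$ is a coset of a graph subgroup in $H \times G$, so $F^{(i,k)} := F_0^{(i)} \cap \mc G(\psi_k)$ is itself a graph coset on which $\pi_H$ is a bijection onto a coset of $H$. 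Splitting $M_i = \bigsqcup_k M_i \cap (Y_k \times G)$ and re-applying Corollary~\ref{corr:1} inside each graph coset (to absorb any sub-cosets that collapse to finite index after this intersection) produces a finer partition $\mc G(\alpha) = \bigsqcup_i M_i'$ where each $M_i'$ has the form equation~\ref{eq:2} and sits inside a graph coset.

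Defining $L_i := \pi_H(M_i')$ together with $E_0^{(i)}, E_j^{(i)}$ as the $\pi_H$-images of the ambient and removed cosets, bijectivity of $\pi_H$ on the graph coset produces $L_i = E_0^{(i)} \setminus \bigcup_j E_j^{(i)}$ of the form equation~\ref{eq:2}; Borel-ness of the $E_j^{(i)}$ follows from Borel-ness of the $K_l$ and of the resulting $\pi_H$-images on the graph coset. On each $L_i$, $\alpha$ coincides with the affine map $\psi_i$ coming from the ambient graph coset; since $L_i$ is of the form equation~\ref{eq:2} and $\alpha|_{L_i}$ is continuous, Proposition~\ref{prop:1} shows $\psi_i$ is (uniformly) continuous on $E_0^{(i)}$, and completeness of $G$ in its one-sided uniformity extends $\psi_i$ to a continuous affine $\overline\alpha_i : \overline{E_0^{(i)}} \to G$. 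For the clopen-ness of $Y$, given $s \in \overline{L_i}$ with a net $s_\lambda \in L_i$, $s_\lambda \to s$, the values $\alpha(s_\lambda) = \overline\alpha_i(s_\lambda)$ converge to $\overline\alpha_i(s) \in G$ while also converging to $\alpha(s)$ in $G_\infty$, forcing $\alpha(s) = \overline\alpha_i(s) \in G$ and $s \in Y$; hence $\overline Y = \bigcup_i \overline{L_i} \subseteq Y$.

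The main obstacle is the refinement step combining the graph-based Corollary~\ref{corr:1} decomposition with the piecewise affine decomposition while keeping the form equation~\ref{eq:2} and the Borel structure intact. Intersecting a sub-coset $F_j^{(i)}$ with the graph coset $\mc G(\psi_k)$ can cause its index in $F^{(i,k)}$ to drop to a finite value, so the second application of Corollary~\ref{corr:1} must be carried out carefully; the tracking of Borel-ness of the projected cosets $E_j^{(i)}$, which uses that each $\psi_k$ becomes continuous (by Proposition~\ref{prop:1} applied to a suitable base case), is the principal technical point.
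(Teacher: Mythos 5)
Your opening moves coincide with the paper's own proof: the graph $\Gamma=\mc G(\alpha)$ is closed, Lemma~\ref{lem:2} puts it in $\Omega(H\times G)$, and Theorem~\ref{thm:one} produces subgroups $K_l\in\mc R_2(\{\Gamma\})$ with $\Gamma\in\mc R(\{K_l\})$, leading to a decomposition of $\Gamma$ into pieces $M_i=F^{(i)}_0\setminus\bigcup_j F^{(i)}_j$ built from Borel cosets.  The genuine gap is in your transfer back to $H$.  Your refinement step intersects these pieces with the graphs $\mc G(\psi_k)$ of the affine maps coming from the \emph{original} piecewise-affine presentation of $\alpha$.  But those data --- the sets $Y_k$, the cosets $\aff(Y_k)$, and hence the graphs $\mc G(\psi_k)$ --- are precisely the objects that may fail to be Borel: the whole difficulty, as the counterexample in the introduction shows, is that the given presentation can involve non-measurable subgroups.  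Intersecting with them destroys exactly the Borel structure that Theorem~\ref{thm:one} was invoked to provide, so your concluding claim that the $E^{(i)}_j$ are Borel does not follow; and the ``index collapse'' problem you flag on top of this is real and never resolved.  The idea you are missing, which makes the whole refinement unnecessary, is that each ambient coset $F^{(i)}_0$ is \emph{automatically} the graph of an affine map: $M_i$ is a subset of the graph $\Gamma$ and the $F^{(i)}_j$ have infinite index in $F^{(i)}_0$, so the proof of Lemma~\ref{lem:2} (that is, of \cite[Lemma~1.2(iii)]{is}) shows $F^{(i)}_0=\mc G(\phi_i)$ for some affine $\phi_i:E^{(i)}_0\to G$, and each $F^{(i)}_j$ is then the graph of $\phi_i$ restricted to a coset $E^{(i)}_j$.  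One simply takes $L_i=E^{(i)}_0\setminus\bigcup_j E^{(i)}_j$ and $\alpha_i=\phi_i$; Proposition~\ref{prop:1} gives continuity of $\phi_i$, and your clopen-ness argument then goes through verbatim.

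A second, independent, gap: even granting Borel graph pieces, your inference that the projections $E^{(i)}_j=\pi_H(F^{(i)}_j)$ are Borel because $\pi_H$ is a bijection on the graph coset is not valid --- injective continuous images of Borel sets need not be Borel outside the second countable (Polish) setting, and here $G,H$ are arbitrary locally compact groups.  The paper avoids this by tracking, via Lemma~\ref{lem:3}, the stronger property that each $F^{(i)}_j$ is a finite union of sets of the form (closed)$\,\cap\,$(open) --- this is where closedness of $\Gamma$ is really used --- and then applying Lemma~\ref{lem:1} to $\phi_i|_{E^{(i)}_j}$, which is continuous and extends continuously to $\overline{E^{(i)}_j}$; the conclusion $E^{(i)}_j=\overline{E^{(i)}_j}\cap V$ with $V$ open gives Borel-ness directly.  (Alternatively, Borel-ness of $F^{(i)}_j$ alone would suffice if one pulls back along the continuous section $\overline{E^{(i)}_0}\to H\times G$, $h\mapsto (h,\overline{\phi_i}(h))$, whose domain is a closed, hence Borel, coset and whose preimage of $F^{(i)}_j$ is exactly $E^{(i)}_j$; but this again requires the affine maps $\phi_i$ attached to the graph decomposition, not your $\psi_k$, whose domains $\aff(Y_k)$ may be non-Borel, so that ``Borel relative to $\aff(Y_k)$'' yields nothing in $H$.)
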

\begin{proof}
We can write $Y$ as the disjoint union of sets of the form as in equation~\ref{eq:2}, say $Y = \bigsqcup_i L_i$, and for each $i$ there is an affine map $\alpha_i$ which agrees with $\alpha$ on $L_i$.  By Proposition~\ref{prop:1} we know that $\alpha_i$ is continuous, and as $\alpha_i$ is affine, it admits a (unique) continuous extension to the closure of the coset on which it is defined, compare \cite[Lemma~1.3(i)]{is}.

We claim that $\overline{L_i} \subseteq Y$.  Indeed, given $x\in\overline{L_i}$, there is a net $(x_j)$ in $L_i$ which converges to $x$, and so $\lim_j \alpha_i(x_j) = \overline\alpha_i(x)$, and as $\alpha_i$ extends $\alpha$ also $\lim_j \alpha(x_j) = \overline\alpha_i(x)$.  As $\alpha:H\rightarrow G_\infty$ is continuous, we conclude that $\alpha(x) = \overline\alpha_i(x) \in G$, and so $x\in Y$ (that is, $x$ is not the point $\infty$).  Notice that we have also shown that $\alpha$ agrees with $\overline\alpha_i$ on $\overline{L_i}$.
Thus we have that
\[ Y = \bigcup_{i=1}^n L_i \subseteq \bigcup_{i=1}^n \overline{L_i} \subseteq Y, \]
and so we have equality throughout.  Hence $Y$ is closed (and also open).

Now consider the graph $\Gamma=\mc G(\alpha) = \{ (y, \alpha(y)) : y\in Y \} \subseteq H\times G$.  That $\alpha$ is continuous shows that $\Gamma$ is closed, and that $\alpha$ is piecewise affine shows that $\Gamma \in \Omega(H\times G)$.  By Theorem~\ref{thm:one} there are subgroups $K_1,\cdots,K_n$ in $\mc R_2(\{\Gamma\})$ so that $\Gamma \in \mc R(\{K_1,\cdots,K_n\})$.  By Lemma~\ref{lem:3} we can write $\Gamma = \bigsqcup_i \Gamma_i$ where each $\Gamma_i$ is of the form (\ref{eq:2}), say $\Gamma_i = F^{(i)}_0 \setminus \bigcup_j F^{(i)}_j$, with each $F^{(i)}_j$ a coset of some $K_k$, and with $F^{(i)}_j \subseteq F^{(i)}_0$ for each $j$.
From Lemma~\ref{lem:3} we also know that each $K_k$ is of the form $C\cap U$ for some closed set $C$ and some open set $U$, because $\Gamma$ is closed, and so translates of $\Gamma$ are closed.  From Lemma~\ref{lem:3} once more, it follows that each member of $\mc R(\{K_i\})$ is also a finite union of sets of the form ``closed intersect open'', in particular this applies to each $F^{(i)}_j$.

The proof of \cite[Lemma~1.2(iii)]{is} (that is, Lemma~\ref{lem:2}) shows that each $F^{(i)}_0 \subseteq H\times G$ is the graph of an affine map, say $\phi_i : E^{(i)}_0\rightarrow G$, for some coset $E^{(i)}_0$ of $H$.  Then $F^{(i)}_j \subseteq F^{(i)}_0$ is also a graph, of the restriction of $\phi_i$ to a coset, say
$E^{(i)}_j$.  Thus, in the start of the proof, we could actually take $L_i = E^{(i)}_0 \setminus \bigcup_j E^{(i)}_j$ and $\alpha_i = \phi_i$.  In particular, each $\phi_i$ is continuous.  Applying Lemma~\ref{lem:1} to the restriction of $\phi_i$ to $E^{(i)}_j$ shows that $E^{(i)}_j$ is Borel (as $F^{(i)}_j$ is the graph).  
\end{proof}

We now suppose that $H$ is $\sigma$-compact.  Under this hypothesis, Steinhaus's Theorem, see \cite{str}, shows that if $C\subseteq H$ is a coset of non-zero (Haar) measure, then $C$ is open.  In the following proof, we use this to show that a finite family of \emph{Borel} cosets, each of which has empty interior, also has union with empty interior.  This is exactly the point which goes wrong in the attempted purely topological proof of \cite[Lemma~1.3]{is}.

\begin{proposition}\label{prop:4}
Let $H$ be $\sigma$-compact, and continue with the notation of Proposition~\ref{prop:2}.  There are $Y_1,\cdots,Y_m$ in the open coset ring of $H$ so that $Y$ is the disjoint union of the $Y_i$, and for each $i$ there is a continuous affine map $\alpha_i:\aff(Y_i)\rightarrow G$ which agrees with $\alpha$ on $Y_i$.
\end{proposition}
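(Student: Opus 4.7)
The plan is to leverage Steinhaus's theorem to bridge the gap between Borel cosets (produced by Proposition~\ref{prop:2}) and open cosets (required here), using the fact that $Y$ is already clopen to absorb a measure-zero error. The dichotomy I will exploit is the one stated just before the proposition: under $\sigma$-compactness, every Borel coset in $H$ is either open (positive Haar measure) or has empty interior (measure zero), because a non-empty open subset of $H$ has positive Haar measure.

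For each $i$, I will replace $L_i = E^{(i)}_0 \setminus \bigcup_j E^{(i)}_j$ by $L_i^\star := E^{(i)}_0 \setminus \bigcup\{E^{(i)}_j : j\geq 1,\ E^{(i)}_j \text{ open}\} \in \Omega_o(H)$ when $E^{(i)}_0$ is open, and by $\emptyset$ otherwise. Then $L_i \subseteq L_i^\star$ (since $L_i$ already avoids every $E^{(i)}_j$), and the difference $L_i^\star \setminus L_i$ sits in a finite union of measure-zero cosets, so has Haar measure zero. The key intermediate claim is that $\alpha = \overline\alpha_i$ on all of $L_i^\star$: the heart of the argument is that $L_i$ is dense in the open set $L_i^\star$ (any non-empty open subset of $L_i^\star$ has positive measure, hence cannot lie in a null set), which forces $L_i^\star \subseteq \overline{L_i} \subseteq Y$, after which the two continuous $G$-valued maps $\alpha$ and $\overline\alpha_i$ on $L_i^\star$ must agree everywhere since they coincide on the dense subset $L_i$.

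From here the rest should be bookkeeping. I will verify $\bigcup_i L_i^\star = Y$: the inclusion $\bigcup_i L_i^\star \subseteq Y$ comes from the previous paragraph, while $Y \setminus \bigcup_i L_i^\star$ is contained in $\bigcup\{L_i : E^{(i)}_0 \text{ not open}\}$ and hence has measure zero. Since $Y$ and $\bigcup_i L_i^\star$ are both clopen, this difference is open, and an open set of measure zero must be empty. Disjointifying by $Y_i := L_i^\star \setminus (L_1^\star \cup \cdots \cup L_{i-1}^\star) \in \Omega_o(H)$ gives the required partition $Y = \bigsqcup_i Y_i$, and setting $\alpha_i := \overline\alpha_i|_{\aff(Y_i)}$ produces a continuous affine map agreeing with $\alpha$ on $Y_i$, since $\aff(Y_i) \subseteq E^{(i)}_0 \subseteq \overline{E^{(i)}_0}$ and $\alpha = \overline\alpha_i$ on $Y_i$ by the key claim.

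The main obstacle, and precisely the point at which the purely topological argument of \cite{is} breaks down, is the passage from the measure-theoretic identity $Y = \bigcup_i L_i^\star$ modulo null sets to an honest set equality. Steinhaus is exactly what is needed, via the observation that a measure-zero open subset of $H$ must be empty; everything else is combinatorial bookkeeping of which cosets are open and which are null.
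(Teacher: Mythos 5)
Your proposal is correct and follows essentially the same route as the paper: your $L_i^\star$ is exactly the paper's $Z_i = E^{(i)}_0 \setminus \bigcup\{E^{(i)}_j : j \geq 1,\ E^{(i)}_j\text{ open}\}$, and both arguments use Steinhaus to make non-open Borel cosets null, deduce $L_i^\star \subseteq \overline{L_i} \subseteq Y$, show $Y \setminus \bigcup_i L_i^\star$ is open and null hence empty, and then disjointify within $\Omega_o(H)$. The only cosmetic difference is that the paper proves the exact equality $Z_i = \overline{L_i}$ and cites the agreement of $\alpha$ with $\overline\alpha_i$ on $\overline{L_i}$ from the proof of Proposition~\ref{prop:2}, whereas you recover that agreement directly via density of $L_i$ in $L_i^\star$ and continuity.
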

\begin{proof}
If $E^{(i)}_0$ is open, then as it is a coset, it must also be closed.  Reorder so that $E^{(i)}_j$ is open for $1\leq j\leq m$ and not open for $j>m$.  Let $Z_i = E^{(i)}_0 \setminus \bigcup_{j=1}^m E^{(i)}_j$.  As each $E^{(i)}_j$ is clopen, for $j\leq m$, it follows that $Z_i$ is clopen.

For $j>m$, as $E^{(i)}_j$ is not open, it has measure zero, and so also $\bigcup_{j>m} E^{(i)}_j$ has measure zero, and so $\bigcup_{j>m} E^{(i)}_j$ has empty interior.  As $L_i \subseteq Z_i$ it follows that $Z_i \setminus L_i \subseteq \bigcup_{j>m} E^{(i)}_j$ so $Z_i\setminus L_i$ has empty interior.  As $Z_i$ is open, we have shown that $Z_i \subseteq \overline{L_i}$.

However, $Z_i$ is closed, so as $L_i\subseteq Z_i$ also $\overline{L_i} \subseteq Z_i$.  We conclude that $\overline{L_i} = Z_i$ is clopen, and clearly in the open coset ring of $H$.

Now reorder so that $E^{(i)}_0$ is open for $i\leq m$ and not for $i>m$.  For $i>m$ we again have that $E^{(i)}_0$ has measure zero, so also $L_i$ has measure zero, so we again conclude that $\bigcup_{i>m} L_i$ has empty interior.  As $Y$ is clopen, and $\bigcup_{i=1}^m \overline{L_i}$ is clopen, it follows that $Y \setminus \bigcup_{i=1}^m \overline{L_i}$ is open.  As $\bigcup_{i=1}^m L_i \subseteq \bigcup_{i=1}^m \overline{L_i}$ it follows that $Y \setminus \bigcup_{i=1}^m \overline{L_i} \subseteq
Y \setminus \bigcup_{i=1}^m L_i = \bigcup_{i>m} L_i$ and so $Y \setminus \bigcup_{i=1}^m \overline{L_i}$ has empty interior, and hence must be empty.  So $Y \subseteq \bigcup_{i=1}^m \overline{L_i}$, but then $\bigcup_{i=1}^m \overline{L_i} \subseteq \overline{ \bigcup_{i=1}^m L_i } \subseteq \overline{Y} = Y$ so we conclude that $Y = \bigcup_{i=1}^m \overline{L_i}$.

Finally, we use that $\alpha_i$ extends continuously to $\overline\alpha_i$ a continuous affine map; restrict this to $\overline{L_i}$.  It seems possible that the $(\overline{L_i})$ are not disjoint, but if we replace $\overline{L_2}$ by $\overline{L_2} \setminus \overline{L_1}$, then we do not leave the open coset ring, and so we can simply adjust to obtain the disjoint family $(Y_i)$ as required.
\end{proof}

We can finally state and prove the main part of \cite[Theorem~3.7]{is}.

\begin{definition}
Let $G,H$ be locally compact groups.  A map $\alpha:Y\subseteq H\rightarrow G$ is a \emph{continuous piecewise affine} map when $Y$ is open, and can be written as a disjoint union $Y = \bigsqcup_i Y_i$ with each $Y_i\in\Omega_o(H)$, and for each $i$ there is an open coset $C_i$ and a continuous affine map $\alpha_i:C_i\rightarrow G$, with $Y_i\subseteq C_i$ and $\alpha_i$ agrees with $\alpha$ on $Y_i$.
\end{definition}

Notice that each $Y_i\in \Omega_o(H)$ is also closed, and so also $Y$ is closed; compare also Remark~\ref{rem:2} below.

\begin{theorem}\label{thm:2}
Let $\Phi:A(G)\rightarrow B(H)$ be a completely bounded homomorphism, with $G$ amenable.  There is a continuous piecewise affine map $\alpha:Y\subseteq H\rightarrow G$ with
\[ \Phi(u)(h) = \begin{cases} u(\alpha(h)) &: h\in Y \\ 0 &: h\not\in Y, \end{cases}
\qquad (u\in A(G), h\in H). \]
\end{theorem}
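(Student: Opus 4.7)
The plan is to combine the pieces already in place. By the setup of Section~\ref{sec:2}, the hypotheses on $\Phi$ and $G$ produce a continuous map $\alpha : H \to G_\infty$ implementing $\Phi$, with $Y = \alpha^{-1}(G)$ open and $\alpha|_Y : Y \to G$ piecewise affine in the set-theoretic sense. So the theorem reduces to upgrading this to a continuous piecewise affine map in the sense of the new definition: producing a disjoint decomposition $Y = \bigsqcup_i Y_i$ with $Y_i \in \Omega_o(H)$ together with open cosets $C_i \supseteq Y_i$ and continuous affine maps $\alpha_i : C_i \to G$ agreeing with $\alpha$ on each $Y_i$.

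When $H$ is $\sigma$-compact, Proposition~\ref{prop:4} (which feeds on Proposition~\ref{prop:2}) delivers exactly this. For arbitrary locally compact $H$, I would bootstrap by choosing an open $\sigma$-compact subgroup $H_0 \leq H$---such a subgroup always exists, being generated by any compact symmetric neighbourhood of the identity---and working coset-by-coset. Writing $H = \bigsqcup_{s \in T} sH_0$ for some set $T$ of coset representatives, each $sH_0$ is clopen. For each $s \in T$ define $\gamma_s : H_0 \to G_\infty$ by $\gamma_s(h) = \alpha(sh)$. Continuity is immediate, and a direct check shows that $\gamma_s$ restricted to $\gamma_s^{-1}(G) = s^{-1}Y \cap H_0$ is piecewise affine as a map into $G$: restriction of a piecewise affine map to a set in $\Omega(H)$ remains piecewise affine, and precomposition with the left translation $h \mapsto sh$ sends an affine piece $\alpha_i$ on a coset $\aff(Y_i)$ of $H$ to an affine map $h \mapsto \alpha_i(sh)$ on the coset $s^{-1}\aff(Y_i) \cap H_0$ of $H_0$, the affinity being transparent from the identity $(sr)(ss')^{-1}(st) = srs'^{-1}t$.

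Applying Proposition~\ref{prop:4} to the pair $(\gamma_s, H_0)$ yields a finite disjoint decomposition $s^{-1}Y \cap H_0 = \bigsqcup_j W_j^{(s)}$ with $W_j^{(s)} \in \Omega_o(H_0)$, together with continuous affine maps $\gamma_j^{(s)} : D_j^{(s)} \to G$ on open cosets of $H_0$ agreeing with $\gamma_s$ on $W_j^{(s)}$. Since $H_0$ is open in $H$, every open subgroup of $H_0$ is an open subgroup of $H$, so $\Omega_o(H_0) \subseteq \Omega_o(H)$; translating by $s$ and using that $\Omega_o(H)$ is left-translation invariant, each $sW_j^{(s)}$ lies in $\Omega_o(H)$ and comes equipped with the open coset $sD_j^{(s)}$ of $H$ and the continuous affine map $h \mapsto \gamma_j^{(s)}(s^{-1}h)$, which agrees with $\alpha$ on $sW_j^{(s)}$. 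The disjoint union of these pieces over all $s \in T$ and $j$ is the required decomposition of $Y$. The main obstacle is precisely the bootstrap step: Proposition~\ref{prop:4} relies on Steinhaus's theorem to promote Borel cosets of non-zero Haar measure to open ones, and this measure-theoretic input genuinely needs $\sigma$-compactness, so localising inside $H_0$ is the natural fix. A secondary point is that $T$ may be infinite, hence so may the decomposition; the definition of continuous piecewise affine in this paper imposes no cardinality restriction on the disjoint decomposition, so this causes no difficulty, though if a finite decomposition were wanted one would need an extra combinatorial argument exploiting that $Y \in \Omega(H)$ is built from finitely many cosets.
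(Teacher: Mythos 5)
Your reduction to the $\sigma$-compact case via an open $\sigma$-compact subgroup $H_0$ is exactly the move the paper makes, and the routine verifications you sketch (continuity and piecewise affinity of $\gamma_s$, the inclusion $\Omega_o(H_0)\subseteq\Omega_o(H)$, translates of affine pieces remaining affine) are fine. The gap is the point you wave away at the end: your construction produces one finite block of pieces for \emph{each} coset $sH_0$, $s\in T$, and $T$ is in general infinite, so you obtain an infinite disjoint decomposition of $Y$. ``Piecewise affine'' in this paper means a \emph{finite} disjoint union (see the definition in the introduction), and the definition of continuous piecewise affine must be read the same way: the sentence immediately following that definition (``each $Y_i\in\Omega_o(H)$ is also closed, and so also $Y$ is closed'') is only valid for finite unions, and Remark~\ref{rem:2} together with the converse direction \cite[Proposition~3.1]{is} (where one builds a completely bounded $\Phi$ from $\alpha$, with norm controlled by the number of pieces) require finiteness as well. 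So what you prove is a strictly weaker statement than Theorem~\ref{thm:2}, and the ``extra combinatorial argument'' you defer is precisely the crux of the non-$\sigma$-compact case.

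The paper's proof avoids this by never re-decomposing locally. It fixes the single \emph{global} finite decomposition $Y=\bigsqcup_i L_i$, with $L_i=E^{(i)}_0\setminus\bigcup_j E^{(i)}_j$ and finitely many continuous affine maps $\overline\alpha_i$, coming from Proposition~\ref{prop:2} (finite because $\mc G(\alpha)\in\Omega(H\times G)$ involves only finitely many cosets). It then applies the $\sigma$-compact argument of Proposition~\ref{prop:4} coset-by-coset, to the restrictions of $\alpha$ to $sH_0\cap Y$, \emph{only} to verify the global identity $Y=\bigcup_{i\leq m}\overline{L_i}$, where the indices $i\leq m$ are those with $E^{(i)}_0$ open: one checks $sH_0\cap Y=\bigcup_{i\leq m}\overline{sH_0\cap L_i}$ for every $s$, and since the sets $sH_0$ are clopen and partition $H$, these local identities glue to the global one, with the same finite family of pieces and affine maps throughout. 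To repair your argument you would need to do something of this kind: keep the finitely many global pieces $L_i$ fixed and use $H_0$ only to test openness and closures locally. Merging your infinitely many abstract pieces $sW^{(s)}_j$ after the fact is not possible as stated, because Proposition~\ref{prop:4}, invoked as a black box on each coset, does not remember which global $L_i$ each local piece came from.
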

\begin{proof}
We have already proved this in the $\sigma$-compact case.  Now let $H$ be an arbitrary locally compact group.  With $L_i$ as in Proposition~\ref{prop:2}, we again wish to prove that $Y = \bigcup_{i=1}^m \overline{L_i}$ where $E^{(i)}_0$ is open for $i\leq m$, and not open otherwise.

There is $H_0 \leq H$ an open (and so closed) $\sigma$-compact subgroup.  Let $\beta$ be the restriction of $\alpha$ to $Y \cap H_0$.  We can apply Proposition~\ref{prop:4} to $\beta$, and so conclude that $H_0 \cap Y$ is the union of the sets $\overline{H_0\cap L_i}$ for those $i$ with $H_0 \cap E^{(i)}_0$ open and non-empty.  However, if $E^{(i)}_0$ is open, then also $H_0 \cap E^{(i)}_0$ is open, and if it is empty, there is no harm in considering it in the union.  Thus $H_0 \cap Y = \bigcup_{i=1}^m \overline{H_0 \cap L_i}$.  This argument would also apply to any translate of $Y$, equivalently, to any coset of $H_0$, so we conclude that $sH_0 \cap Y = \bigcup_{i=1}^m \overline{sH_0 \cap L_i}$ for any $s$.  As each $sH_0$ is clopen, it follows that $Y = 
\bigcup_{i=1}^m \overline{L_i}$ as required.
\end{proof}

\begin{remark}\label{rem:2}
On \cite[page~487]{is}, $\alpha:Y\subseteq H\rightarrow G$ is defined to be ``continuous piecewise affine'' when $\alpha$ is piecewise affine, and $Y$ is clopen in $H$.  If $\alpha$ is of this form, then we can extend $\alpha$ to a map $\alpha:H\rightarrow G_\infty$ by defining $\alpha(y) = \infty$ for $y\not\in Y$, and then $\alpha$ will still be continuous, because $Y$ is clopen.  Then we are in exactly the situation of Proposition~\ref{prop:2}, and so the results above imply that $\alpha$ is a continuous piecewise affine map in our sense.

As such, the use of \cite[Lemma~1.3(ii)]{is} in the proof of the converse of the result above, \cite[Proposition~3.1]{is}, is also corrected.
\end{remark}

The original use of \cite[Lemma~1.3(ii)]{is} was to show that if $\alpha:Y\subseteq G\rightarrow H$ is piecewise affine, and continuous, with $Y$ open, then also $\overline Y$ is open, and there is a continuous piecewise affine map $\overline\alpha : \overline Y\rightarrow H$ extending $\alpha$.  We have been unable to decide if this result is true or not.

\bigskip

\noindent\emph{Author's Address:}

\noindent
Matthew Daws, Jeremiah Horrocks Institute,
University of Central Lancashire,
Preston,
PR1 2HE,
United Kingdom

\noindent\emph{Email:} \texttt{matt.daws@cantab.net}

\end{document}